\def\BState{\State\hskip-\ALG@thistlm}
\theoremstyle{plain} 
\newtheorem{thm}{Theorem}[section]
\newtheorem{lem}[thm]{Lemma}
\newtheorem{prob}[thm]{Problem}
\newtheorem{conj}[thm]{Conjecture}
\theoremstyle{definition}
\newtheorem{defn}[thm]{Definition}
\newtheorem{rem}[thm]{Remark}
\newtheorem{ex}[thm]{Example}
\numberwithin{equation}{section}
\renewcommand{\theta}{\vartheta}
\renewcommand{\phi}{\varphi}
\renewcommand{\epsilon}{\varepsilon}
\renewcommand{\subset}{\subseteq}
\newcommand{\Z}{\mathbb Z}
\newcommand{\C}{\mathbb C}
\newcommand{\upsubset}{\begin{rotate}{90}$\subset$\end{rotate}}
\DeclareMathOperator{\Aut}{Aut}
\DeclareMathOperator{\QAut}{QAut}
\DeclareMathOperator{\GL}{GL}
\begin{document}
\title[Sinkhorn algorithm for quantum permutation groups]{Sinkhorn algorithm for quantum permutation groups}

\author{Ion Nechita}
\address{I.N.:~Laboratoire de Physique Th\'eorique, Universit\'e de Toulouse, CNRS, UPS, France}
\email{nechita@irsamc.ups-tlse.fr}

\author{Simon Schmidt}
\author{Moritz Weber}
\address{S.Sch. and M.W. :~Saarland University, Fachbereich Mathematik, Postfach 151150, 66041 Saarbr\"ucken, Germany}
\email{simon.schmidt@math.uni-sb.de}
\email{weber@math.uni-sb.de}

\date{\today}
\subjclass[2010]{46L89 (Primary); 20B25, 05C85 (Secondary)}
\keywords{compact quantum groups, quantum automorphism groups of graphs, quantum symmetries of graphs, quantum permutation matrix, Sinkhorn algorithm}

\begin{abstract}
We introduce a Sinkhorn-type algorithm for producing quantum permutation matrices encoding symmetries of graphs. Our algorithm generates square matrices whose entries are orthogonal projections onto one-dimensional subspaces satisfying a set of linear relations. We use it for experiments on the representation theory of the quantum permutation group and quantum subgroups of it. We apply it to the question whether a given finite graph (without multiple edges) has quantum symmetries in the sense of Banica. In order to do so, we run our Sinkhorn algorithm and check whether or not the resulting projections commute. We discuss the produced data and some questions for future research arising from it.
\end{abstract}

\maketitle

\tableofcontents

\section{Introduction and main results}

In the 1980's, Woronowicz defined compact matrix quantum groups \cite{woronowicz1987compact} in order to provide an extended notion of symmetry in the analytic realm. The starting point is to replace compact matrix groups $G\subset \GL_n(\mathbb C)$ by their algebras $C(G)$ of continuous complex valued functions. These algebras are equipped with a Hopf algebra like structure and the crucial point is that they are commutative (with respect to pointwise multiplication). Now, a compact matrix quantum group is a \emph{noncommutative} algebra sharing certain axiomatic properties with these algebras $C(G)$. The class of compact matrix quantum groups contains the class of compact matrix groups and it provides the right analytic symmetry objects for a whole world of quantum mathematics such as quantum topology ($C^*$-algebras), quantum measure theory (von Neumann algebras), quantum probability theories (e.g. free probability theory), noncommutative geometry (in the sense of Alain Connes) and quantum information theory, just to name a few.

In the 1990's, Sh.~Wang gave a definition of quantum permutation groups $S_n^+$ \cite{wang1998quantum}. The idea is roughly to replace the entries 0 and 1 in a permutation matrix $u\in S_n\subset \GL_n(\mathbb C)$ by orthogonal projects onto subspaces of some Hilbert space; one can think of quantum permutation matrices as $n\times n$ matrices $u=(u_{ij})_{i,j=1,\ldots,n}$ whose entries $u_{ij}$ are again matrices rather than scalars 0 or 1. The crucial point is that the $u_{ij}$ are orthogonal projections summing up to the identity 1 on each row and column:
\begin{equation}\label{eq:magic-unitary}
u_{ij}=u_{ij}^2=u_{ij}^*,\quad \sum_{k=1}^n u_{ik}=\sum_{k=1}^n u_{kj}=1\quad \forall i,j=1,\ldots, n
\end{equation}
The precise definition may be found in Section \ref{Prelim}; it is given in terms of universal $C^*$-algebras $A_s(n)$. The quantum permutation group $S_n^+$ contains the classical permutation group $S_n$ as a (quantum) subgroup --- in a way, we thus have more quantum permutations than ordinary permutations.
An important task is to find representations of this $C^*$-algebra --- or, a bit more on the intuitive level, to find concrete examples of quantum permutation matrices: for instance, we would like to find matrices $u_{ij}\in M_m(\mathbb C)$ satisfying the above relations. See also \cite{banica2017flat}, \cite[Sect.~4.1]{brannan2018topological} as well as the recent links with quantum information theory \cite{lupini2017nonlocal, MR2019} for more questions in this directions.

Now, for bistochastic matrices, i.e. for orthogonal matrices $u=(u_{ij})\in O_n\subset \GL_n(\mathbb C)$ with $\sum_{k} u_{ik}=\sum_{k} u_{kj}=1$, the Sinkhorn algorithm \cite{sinkhorn1964relationship, sinkhorn1967concerning} is a useful tool to produce such matrices. The main idea is to pick an arbitrary orthogonal matrix and then to perform normalizations: In Step 1, replace all entries $u_{ij}$ by $u_{ij}':=u_{ij}/\sum_k u_{ik}$ and in Step 2, replace entries $u_{ij}'$ by $u_{ij}'':=u_{ij}'/\sum_k u_{kj}'$. Performing Steps 1 and 2 in an alternating way, one eventually obtains a bistochastic matrix. An algorithm which is adapted to quantum permutation matrices was introduced in  \cite{benoist2017bipartite,banica2017flat}: Starting with arbitrary orthogonal rank one projections $u_{ij}$, orthonormalize the rows in Step 1 and then the columns (using the new entries) in Step 2. Approximately, this eventually yields a quantum permutation matrix in the above sense. 

In this work, we introduce a new Sinkhorn-type algorithm with the help of which we may tackle the question of existence of quantum symmetries for finite graphs: Given a (directed) graph $\Gamma$ on $n$ vertices having no multiple edges, its automorphism group $\Aut(\Gamma)$ consists in all permutation matrices $\sigma\in S_n\subset \GL_n(\mathbb C)$ commuting with the adjacency matrix $A \in M_n(\{0,1\})$ of $\Gamma$. Similarly, Banica \cite{banica2005quantum} defined the quantum automorphism group $\QAut(\Gamma)$ by adding the relation
\begin{equation}\label{eq:graph-symmetry}
u A=A u
\end{equation}
to those of $S_n^+$, see \eqref{eq:magic-unitary}. Thus, $\QAut(\Gamma)$ is a quantum subgroup of $S_n^+$ containing the classical automorphism group $\Aut(\Gamma)$. We then say that $\Gamma$ has quantum symmetries, if the inclusion of $\Aut(\Gamma)$ in $\QAut(\Gamma)$ is strict \cite{banica2005quantum}. In order to prove the existence of quantum symmetries, it is sufficient to find an example of a quantum permutation matrix commuting with the adjacency matrix and having at least two entries $u_{ij}$ and $u_{kl}$ such that
\[u_{ij}u_{kl}\neq u_{kl}u_{ij}.\]
We now include the condition  \eqref{eq:graph-symmetry} in our algorithm and obtain a tool for testing a graph for quantum symmetry. The main aim of this article is to present a number of examples of cases of graphs to which we applied the algorithm and successfully confirmed the existence or non-existence of quantum symmetries. It is summarized in Table \ref{tab:graphs-q-sym}. Here, the parameter $\tau\in (0,1)$ rules the intensity of enforcing relation \eqref{eq:graph-symmetry}, while $\varepsilon$ is the tolerance regarding violations of both relations \eqref{eq:magic-unitary} and \eqref{eq:graph-symmetry}.

\begin{table}
\begin{tabular}{|c|c|c||c|c|c|c||c|c|}
\hline
\multicolumn{3}{|c||}{Graph}                                                                                         & \multicolumn{2}{c|}{Performance}                                                                                             & \multicolumn{2}{c||}{Parameters}                                  & \multicolumn{2}{c|}{Has q. symm.?} \\ \hline
\multicolumn{1}{|c|}{Name} & $|V|$ & \multicolumn{1}{c||}{\begin{tabular}[c]{@{}c@{}}$\mathrm{Aut}$ \\ \end{tabular}} & \multicolumn{1}{c|}{\begin{tabular}[c]{@{}c@{}}Avg.\\ iter.\end{tabular}} & \multicolumn{1}{c|}{\begin{tabular}[c]{@{}c@{}}Avg.\\ runtime\end{tabular}} & \multicolumn{1}{c|}{$\varepsilon$} & \multicolumn{1}{c||}{$\tau$} & \multicolumn{1}{c|}{\begin{tabular}[c]{@{}c@{}}Alg.\\ pred.\end{tabular}} & \multicolumn{1}{c|}{Proof} \\ \hline \hline
$K_4$ & 4 & $S_4$ & 106.32 & 0.23 sec. & $10^{-6}$ & 1 & Y & \cite{wang1998quantum} \\ \hline
$K_5$ & 5 & $S_5$ & 113.15 & 0.39 sec. & $10^{-6}$ & 1 & Y & \cite{wang1998quantum} \\ \hline
$K_6$ & 6 & $S_6$ & 1118.25 & 4.98 sec. & $10^{-6}$ & 1 & Y & \cite{wang1998quantum} \\ \hline
Cube $Q_3$ & 8 & $H_3$ & 92.11 & 7.25 sec & $10^{-3}$ & 0.5 & Y & \cite{banica2005quantum} \\ \hline
Petersen & 10 & $S_5$ & 402.32 & 42.4 sec. & $10^{-3}$ & 0.5 & N & \cite{schmidt2018petersen} \\ \hline
$L(Q_3)$ & 12 & $H_3$ & 71.18  & 20.53 sec. & $10^{-3}$ & 0.5 & N & --- \\ \hline
$L(C_6(2))$ & 12 & $\mathbb Z_2 \wr S_3$ & 67.89  & 36.37 sec. & $10^{-3}$ & 0.5 & N & --- \\ \hline
$\mathrm{Trun}(K_4)$ & 12 & $S_4$ & 283.87  & 132.9 sec. & $10^{-3}$ & 0.5 & N & --- \\ \hline
$K_3 \square C_4$ & 12 & $S_3\times D_4 $& 66.31 & 29.65 sec. & $10^{-3}$ & 0.5 & Y & \cite{BanBic}\footnotemark \\ \hline
$\mathrm{antip}(\mathrm{Trun}(K_4))$ & 12 & $S_4$ & 95.39 & 38.29 sec. & $10^{-3}$ & 0.5 & N & --- \\ \hline
Icosahedron & 12 & $A_5 \times \mathbb Z_2$ & 61.89  & 18 sec. & $10^{-3}$ & 0.5 & N & \cite{schmidt2019quantum} \\ \hline
co-Heawood & 14 & $\mathrm{PGL}(2,7)$ & 83.69  & 64 sec. & $10^{-1}$ & 0.5 & N & \cite{schmidt2019quantum} \\ \hline
Hamming(2,4) & 16 & $S_4 \wr \mathbb Z_2$ & 102.74  & 48.7 sec. & $10^{-1}$ & 0.5 & Y & \cite{schmidt2019quantum} \\ \hline
Shrikhande & 16 & ord.~192 &  126.73 & 54.04 sec. & $10^{-1}$ & 0.5 & N & \cite{schmidt2019quantum} \\ \hline
Cube $Q_4$ & 16 & $\mathbb Z_2 \wr S_4$ &  71.63 & 36.26 sec. & $10^{-1}$ & 0.5 & Y & \cite{banica2007hyperoctahedral} \\ \hline
Clebsch & 16 & $\Z_2^4 \rtimes S_5$ &  91.07 & 72.20 sec. & $10^{-1}$ & 0.5 & Y & \cite{schmidt2018quantuma} \\ \hline
\end{tabular}

\vspace{.3cm}

\caption{Some graphs and their quantum symmetries: numerical experiments and theory. In the cases where the result is known, the algorithm predicts the correct answer. The performance of the algorithm is measured by the average number of Sinkhorn iterations in the main function and by the average runtime, for $10^3$ independent runs.}
\label{tab:graphs-q-sym}
\end{table}

A number of questions arise in connection with Table \ref{tab:graphs-q-sym} and our Sinkhorn algorithm in general. We discuss them in Section \ref{sec:experiments}. For instance, note that our algorithm only produces quantum permutation matrices with all entries being rank one projections and its prediction is based on this special case. This implies that we may apply it only to graphs which satisfy a certain uniformly vertex transitive condition, see Definition \ref{def:vertex-transitive} and \cite{UVT} for a discussion of this property. However, even given these restrictions, our algorithm is surprisingly well-behaved and at least for the examples presented in Table \ref{tab:graphs-q-sym} it precisely matches the theoretically proved answer, if known.\footnotetext{Here $K_3 \square C_4$ denotes the Cartesian product of $K_3$ and $C_4$. This graph has quantum symmetry because of Proposition 4.1 in \cite{BanBic} and the fact that $C_4$ has quantum symmetry.} In the case of the graphs $L(Q_3), L(C_6(2)), \mathrm{Trun}(K_4)$ and $\mathrm{antip}(\mathrm{Trun}(K_4))$\footnote{By $C_6(2)$ we mean the cycle graph $C_6$, where additionally vertices at distance two in $C_6$ are connected. The graph denoted $\mathrm{Trun}(K_4)$ is the truncated tetrahedral graph, $\mathrm{antip}(\mathrm{Trun}(K_4))$ its distance-three graph. We provide the adjacency matrices of all graph appearing in Table \ref{tab:graphs-q-sym} in the supplementary material of the arXiv submission.} our algorithm predicts an answer going beyond the state of the art; we leave the full proof of the lack of quantum symmetries open.

\section{The background --- Quantum symmetries of finite graphs}
\label{Prelim}

In this section, we briefly present the mathematical background of this article: We recall the definition of compact (matrix) quantum groups, of the quantum permutation group $S_n^+$ and of quantum automorphism groups of finite graphs including the one of quantum symmetries of graphs. Readers not particularly interested in $C^*$-algebras or quantum groups may directly jump to Section \ref{SectProblem}, the formulation of the problem underlying our algorithm.

\subsection{Compact matrix quantum groups}

In 1987, Woronowicz \cite{woronowicz1987compact} defined compact matrix quantum groups as follows: A compact matrix quantum group $G=(B,u)$ is given by a unital $C^*$-algebra $B$ which is generated by the entries $u_{ij}$ of a matrix $u=(u_{ij})_{i,j=1,\ldots,n}\in M_n(B)$ such that the matrix $u$ as well as $\bar u=(u_{ij}^*)$ are invertible in $M_n(B)$ and there is a $^*$-homomorphism $\Delta:B\to B\otimes B$ mapping $u_{ij}\mapsto \sum_k u_{ik}\otimes u_{kj}$.

Let us take a look at the classical case. Let $G\subset \GL_n(\mathbb C)$ be a compact group. Let $B=C(G)$ be the algebra of continuous complex valued functions on $G$ and let $u_{ij}:G\to\mathbb C$ be the evaluation of matrices $x\in G$ at their entry $x_{ij}$. Then $(C(G), (u_{ij}))$ is a compact matrix quantum group and $\Delta$ corresponds to the matrix multiplication; here  we used the isomorphism of $C(G)\otimes C(G)$ with $C(G\times G)$. 
We infer:
\[\{G\subset \GL_n(\mathbb C) \textnormal{ compact groups}, n\in\mathbb N\}\subsetneq\{\textnormal{compact matrix quantum groups}\}.\]
See \cite{timmermann2008invitation,neshveyev2013compact,weber2017introduction} for more on this subject. A central example for our article was given by Wang in 1998 \cite{wang1998quantum}.

\begin{ex}\label{ExSnPLus}
The quantum permutation group $S_n^+$ is given by the universal unital $C^*$-algebra
\[A_s(n):=C^*\left(u_{ij}, i,j=1,\ldots,n\,\bigg|\,u_{ij}=u_{ij}^2=u_{ij}^*, \sum_{k=1}^n u_{ik}=\sum_{k=1}^n u_{kj}=1 \right).\]
\end{ex}

The quotient of $A_s(n)$ by the relations $u_{ij}u_{kl}=u_{kl}u_{ij}$ is isomorphic to $C(S_n)$, where $S_n$ is the permutation group represented by permutation matrices in $\GL_n(\mathbb C)$. We thus view $S_n$ as a quantum subgroup of $S_n^+$.
If $n\geq 4$, this inclusion is strict and we then have more quantum permutations than permutations, in a way.

\subsection{Quantum automorphism groups of graphs}

Let $\Gamma=(V,E)$ be a graph with vertex set $V=\{1,\ldots,n\}$ and edges $E\subset V\times V$, i.e. we have no multiple edges. Let $A \in M_n(\{0,1\})$ be its adjacency matrix, i.e. $A_{ij}=1$ if and only if $(i,j)\in E$. An automorphism of $\Gamma$ is an element $\sigma\in S_n$ with $(i,j)\in E$ if and only if $(\sigma(i),\sigma(j))\in E$. We may thus define the automorphism group of $\Gamma$ by
\[\Aut(\Gamma):=\{\sigma\in S_n\,|\, \sigma A = A \sigma\}\subset S_n.\]
This definition has been set in the quantum context by Banica in 2005 \cite{banica2005quantum}, see also \cite{bichon2003quantum} for a previous definition. Banica defined the quantum automorphism group $\QAut(\Gamma)$ by the quotient of $A_s(n)$ from Example \ref{ExSnPLus} by the relations $u A=A u$. We thus have:
\begin{align*}
 S_n &\qquad\subset \qquad S_n^+\\
\upsubset\; & \qquad\qquad\qquad\upsubset\;\\
 \Aut(\Gamma) &\qquad\subset \qquad \QAut(\Gamma)
\end{align*}
If the inclusion on the lower line is strict, we say that $\Gamma$ has quantum symmetries \cite{banica2005quantum}. 

\begin{defn} A graph $\Gamma$ is said to \emph{have quantum symmetries} if the quotient of $A_s(n)$ by the relation  $u A = A u$ is a  noncommutative algebra. 
\end{defn}

The question whether or not a graph has quantum symmetries is highly non-trivial. There are a number of criteria to check it and it has been settled for certain classes of graphs, but in general, one has to study this question case by case. See also \cite{schmidt2018quantum, schmidt2018petersen, schmidt2019quantum} for more on quantum automorphism groups of graphs and quantum symmetries.

\subsection{The problem}
\label{SectProblem}

In order to show that a given graph $\Gamma$ has quantum symmetries, it is enough to find a quantum permutation matrix with noncommuting entries, as we will describe in this section.

\begin{defn}\label{DefMagicUnitary}
A (concrete finite-dimensional) $n\times n$ \emph{quantum permutation matrix} (also called a \emph{magic unitary}) is a matrix $u=(u_{ij})_{i,j=1,\ldots,n}\in M_n(M_m(\mathbb C))$ for some $m\in\mathbb N$ such that all $u_{ij}\in M_m(\mathbb C)$ are orthogonal projections ($u_{ij}=u_{ij}^2=u_{ij}^*$) and we have $\sum_k u_{ik}=\sum_k u_{kj}=1$ for all $i,j=1,\ldots,n$.
\end{defn}

\begin{rem}
The (finite-dimensional) magic unitaries of Definition \ref{DefMagicUnitary} also appear in the quantum information theory literature. Here they are also called projective permutation matrices \cite[Def.~5.5]{atserias2019quantum} or matrices of partial isometries in the setting of \cite[Def.~3.2]{benoist2017bipartite}. See also \cite{lupini2017nonlocal} or the very remarkable article \cite{MR2019}.
\end{rem}

Given an adjacency matrix $A\in M_n(\{0,1\})$ and $m\in \mathbb N$, we view $A\in M_n(M_m(\mathbb C))$ as the matrix $A\otimes I_m$, i.e.~we replace all entries $A_{ij}=0$ by the $m\times m$ matrix filled with zeros, and we replace $A_{ij}=1$ by the $m\times m$ identity matrix.

\begin{lem}\label{LemQPerm}
Let $\Gamma=(V,E)$ be a finite graph with no multiple edges and let $A\in M_n(\{0,1\})$ be its adjacency matrix.
Assume that there is a quantum permutation matrix $u=(u_{ij})\in M_n(M_m(\mathbb C))$ with $u A=A u$ and two entries $u_{i_0j_0}$ and $u_{k_0l_0}$ such that
 \[u_{i_0j_0}u_{k_0l_0}\neq u_{k_0l_0}u_{i_0j_0}.\]
 Then $\Gamma$ has quantum symmetries.
\end{lem}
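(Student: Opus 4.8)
The plan is to invoke the universal property of the $C^*$-algebra defining $\QAut(\Gamma)$ and feed it the concrete quantum permutation matrix supplied by the hypothesis. Write $B$ for the quotient of $A_s(n)$ by the relations $uA=Au$; by Definition of quantum symmetries, $\Gamma$ has quantum symmetries exactly when $B$ is noncommutative. First I would unwind the matrix identity $uA=Au$ in $M_n(B)$: since the entries $A_{kj}$ are scalars in $\{0,1\}$, it is equivalent to the family of linear relations $\sum_{k} A_{kj}\,u_{ik}=\sum_{k} A_{ik}\,u_{kj}$ for all $i,j$. Thus $B$ is the universal unital $C^*$-algebra generated by elements $u_{ij}$ subject to the magic unitary relations \eqref{eq:magic-unitary} together with these linear relations; this universal object exists because all generators are projections, hence of norm at most $1$.

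Next I would note that the concrete matrix $u=(u_{ij})\in M_n(M_m(\mathbb{C}))$ from the hypothesis satisfies in $M_m(\mathbb{C})$ precisely these same relations: by assumption the $u_{ij}$ are orthogonal projections with $\sum_k u_{ik}=\sum_k u_{kj}=I_m$, and the identity $uA=Au$ in $M_n(M_m(\mathbb{C}))$ (with $A$ read as $A\otimes I_m$) unwinds, by the same computation, to $\sum_k A_{kj}u_{ik}=\sum_k A_{ik}u_{kj}$. Hence the universal property of $B$ yields a unital $^*$-homomorphism $\pi\colon B\to M_m(\mathbb{C})$ sending each abstract generator $u_{ij}$ to the corresponding concrete matrix $u_{ij}$.

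The conclusion is then immediate: if the abstract generators commuted, i.e.\ $u_{i_0j_0}u_{k_0l_0}=u_{k_0l_0}u_{i_0j_0}$ held in $B$, applying $\pi$ would give $u_{i_0j_0}u_{k_0l_0}=u_{k_0l_0}u_{i_0j_0}$ in $M_m(\mathbb{C})$, contradicting the hypothesis. Therefore $B$ is noncommutative, so $\Gamma$ has quantum symmetries.

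I do not expect any genuine obstacle here: the entire argument is an application of the universal property of $A_s(n)$ and of the quotient defining $\QAut(\Gamma)$. The only point deserving a moment of care is the bookkeeping that the matrix relation $uA=Au$ translates, both in $M_n(B)$ and in $M_n(M_m(\mathbb{C}))$, into the very same set of scalar-coefficient linear relations among the entries, so that the same universal property applies on both sides and $\pi$ is well defined.
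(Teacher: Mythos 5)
Your proposal is correct and follows essentially the same route as the paper: use the universal property of the quotient $B$ of $A_s(n)$ by $uA=Au$ to obtain a $^*$-homomorphism onto the concrete entries $u_{ij}\in M_m(\mathbb{C})$, and conclude that $B$ cannot be commutative since its image contains two noncommuting projections. Your version simply spells out more carefully the bookkeeping (unwinding $uA=Au$ into scalar-coefficient linear relations and the existence of the universal algebra) that the paper leaves implicit.
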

\begin{proof}
The quantum permutation matrix $u=(u_{ij})\in M_n(M_m(\mathbb C))$ gives rise to a $^*$-homomorphism from  the quotient $B$ of $A_s(n)$ by $u A=A u$ to $M_n(M_m(\mathbb C))$ mapping the generators of $B$ to the entries $u_{ij}\in M_m(\mathbb C)$. Thus $B$ (which is the algebra corresponding to $\QAut(\Gamma)$) is a  noncommutative algebra.
\end{proof}

We may now state the problem motivating our article.

\begin{prob}
Given a finite graph $\Gamma$ --- does it have quantum symmetries? Can we find a noncommutative quantum permutation matrix in the sense of Lemma \ref{LemQPerm}?
\end{prob}

In our article, we restrict our search to the case $m=n$ and to rank one projections for the elements $u_{ij}$. Although this seems to be quite a restriction, our algorithm still produces quite reliable results when comparing our results with the previously known answers (if they exist). More on this in Section \ref{sec:experiments}.

\section{The algorithms}

Inspired by the Sinkhorn algorithm for bistochastic matrices, the following Sinkhorn algorithm for $S_n^+$ was introduced in \cite{banica2017flat}. We shall later include the condition $u A=A u$ in order to find evidence for the (non-) existence of quantum symmetries of a given graph $\Gamma$, see Algorithm \ref{alg:Sinkhorn-QAut}.

\subsection{The algorithm for finding quantum permutation matrices: Algorithm \ref{alg:Sinkhorn-Sn-plus}}

The main idea behind the Sinkhorn algorithm for finding quantum permutation matrices is as follows:
\begin{enumerate}
\item Pick $n^2$ rank one projections (or rather $n^2$ vectors in $\mathbb C^n$) and put them into an $n\times n$ matrix.
\item Orthonormalize the rows.
\item Orthonormalize the columns.
\item Repeat these two orthonormalizations in an alternating way until the error to the magic unitary condition (Def. \ref{DefMagicUnitary}) is small, or a pre-defined maximal number of iterations has been reached.
\end{enumerate}

The above procedure is reminiscent of the classical Sinkhorn algorithm \cite{sinkhorn1964relationship} for obtaining bistochastic matrices, by normalizing the row sums and the column sums of a matrix with positive entries. We are performing the same alternating minimization procedure, the only difference being that we are considering operator entries instead of scalar elements. The normalization procedure is performed in a way which preserves hermitian (actually positive semidefinite) operators: 
\begin{equation}\label{eq:normalization-S}
X_{ij}' := S^{-1/2} X_{ij} S^{-1/2},
\end{equation}
where $S$ is the matrix corresponding to a row (resp.~ column) sum. Note that in our case the matrix $S$ will be generically positive definite (hence invertible); more general situations would require taking a (Moore-Penrose) pseudo-inverse in the equation above. We initialize our matrix blocks $X_{ij}$ with random Gaussian rank-one projections $X_{ij} = x_{ij} x_{ij}^*$, the vectors $x_{ij} \in \mathbb C^n$ being independent, identically distributed Gaussian vectors. Formally, in the procedure described in detail below, the variable $x$ is a $3$-tensor. The normalization procedure is based on the following easy lemma, which provides an explicit Gram-Schmidt orthonormalization procedure. We put $[n] := \{1, 2, \ldots, n\}$ and we denote the Schatten-2 (or Hilbert-Schmidt) norm on $M_n(\mathbb C)$ by $\|{X}\|_2 := \sqrt{\operatorname{Tr}(X^*X)}$.

\begin{lem}
Let $x_1, \ldots, x_n$ be arbitrary linearly independent vectors in $\mathbb C^n$. Define, for $i \in [n]$, $y_i = S^{-1/2} x_i$, where $S = \sum_{i=1}^n x_i x_i^*$. Then, the vectors $\{y_1
, \ldots, y_n\}$ form an orthonormal basis of $\mathbb C^n$ and we have $\sum_{i=1}^n y_iy_i^*=I_n$.
\end{lem}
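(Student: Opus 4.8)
The plan is to verify the two claimed identities directly from the definition $y_i = S^{-1/2} x_i$, so the real content is checking that $S = \sum_{i=1}^n x_i x_i^*$ is positive definite (so that $S^{-1/2}$ makes sense) and then a one-line computation for each identity. First I would observe that $S$ is positive semidefinite as a sum of rank-one positive operators $x_i x_i^*$, and that for any $v \in \mathbb{C}^n$ we have $\langle Sv, v\rangle = \sum_i |\langle x_i, v\rangle|^2$; this vanishes only if $v$ is orthogonal to every $x_i$, which forces $v = 0$ since the $x_i$ are linearly independent and hence span $\mathbb{C}^n$. So $S$ is invertible with a well-defined positive-definite square root $S^{-1/2}$.

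Next I would compute $\sum_{i=1}^n y_i y_i^*$. Since $S^{-1/2}$ is self-adjoint, $y_i y_i^* = (S^{-1/2} x_i)(S^{-1/2} x_i)^* = S^{-1/2} x_i x_i^* S^{-1/2}$, and summing over $i$ and pulling the fixed operator $S^{-1/2}$ out of the sum gives
\[
\sum_{i=1}^n y_i y_i^* = S^{-1/2}\Bigl(\sum_{i=1}^n x_i x_i^*\Bigr) S^{-1/2} = S^{-1/2} S\, S^{-1/2} = I_n.
\]
This is exactly the stated resolution-of-identity relation. It remains to see that $\{y_1,\ldots,y_n\}$ is an orthonormal basis: the identity $\sum_i y_i y_i^* = I_n$ already shows the $y_i$ span $\mathbb{C}^n$, and since there are exactly $n$ of them they are linearly independent; alternatively, $S^{-1/2}$ is invertible, so it maps the basis $\{x_i\}$ to a basis $\{y_i\}$. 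To upgrade ``basis'' to ``orthonormal basis'', note that an $n$-tuple of vectors in $\mathbb{C}^n$ satisfying $\sum_i y_i y_i^* = I_n$ is automatically orthonormal: the Gram matrix $G$ with $G_{ij} = \langle y_i, y_j\rangle$ and the matrix $Y$ whose columns are the $y_i$ satisfy $Y Y^* = I_n$ and $G = Y^* Y$, and $Y Y^* = I_n$ with $Y$ square forces $Y^* Y = I_n$, i.e. $G = I_n$.

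There is essentially no obstacle here; the only point requiring any care is the invertibility of $S$, and that is an immediate consequence of the linear independence hypothesis. Everything else is a routine manipulation with $S^{-1/2}$ being self-adjoint and commuting with $S$. I would present the argument in the order above: (i) $S > 0$, hence $S^{-1/2}$ exists; (ii) the computation $\sum_i y_i y_i^* = I_n$; (iii) conclude orthonormality, either from the square-matrix identity $YY^* = I_n \Rightarrow Y^*Y = I_n$ or simply by noting that $S^{-1/2}$ is, by construction, the Gram--Schmidt-type transformation that orthonormalizes $\{x_i\}$.
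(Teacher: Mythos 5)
Your proof is correct, and its overall architecture matches the paper's: establish that $S$ is positive definite from linear independence, compute $\sum_i y_i y_i^* = S^{-1/2} S S^{-1/2} = I_n$, and then deduce orthonormality from this resolution of the identity together with the fact that there are exactly $n$ vectors. The only genuine difference is in the last step. The paper argues via a trace inequality: from $\|y_i\|^2 = \sum_j |\langle y_i, y_j\rangle|^2 \geq \|y_i\|^4$ one gets $\|y_i\| \leq 1$, and then $n = \operatorname{Tr} I_n = \sum_j \|y_j\|^2 \leq n$ forces equality throughout, hence $\|y_i\| = 1$ and $\langle y_i, y_j\rangle = 0$ for $i \neq j$. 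You instead package the $y_i$ as the columns of a square matrix $Y$ and use that $YY^* = I_n$ forces $Y^*Y = I_n$, so the Gram matrix is the identity. Both arguments are valid and use the dimension count in an essential way (the paper through $\operatorname{Tr} I_n = n$, you through squareness of $Y$); yours is arguably the more economical linear-algebra route, while the paper's inequality argument is phrased so as to read off exactly where each equality is attained. Either version is acceptable.
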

\begin{proof}
To begin with, note that the linear independence condition for the $x_i$'s implies that the matrix $S$ is positive definite, hence it admits a square root inverse. It is then clear that the vectors $y_i$ satisfy $\sum_{i=1}^n y_i y_i^* = I_n$; we show next that such vectors must form an orthonormal basis. First, note that, for all $i \in [n]$, 
$$\|y_i\|^2 = \langle y_i , y_i \rangle = \left\langle y_i , \left( \sum_{j=1}^n y_jy_j^* \right)y_i \right\rangle = \sum_{j=1}^n |\langle y_i, y_j \rangle|^2 \geq \|y_i\|^4,$$
implying that $\|y_i\| \leq 1$. On the other hand, we have 
$$n = \operatorname{Tr} I_n = \sum_{j = 1}^n \operatorname{Tr}(y_j y_j^*) = \sum_{j=1}^n \|y_j\|^2 \leq n.$$
This implies that equality holds in all the above inequalities and thus $\|y_i\| = 1$ for all $i$ and $\langle y_i , y_j \rangle = 0$ for all $i \neq j$.
\end{proof}

Note that the iterative scaling procedure we propose will not stop (in the generic case) after a finite number of steps (the same holds true for the original Sinkhorn algorithm). Hence, one needs to stop after reaching a target precision, or after a pre-defined number of iterations (in which case we declare that the algorithm has failed in finding an approximate quantum permutation matrix). We measure the distance between a matrix $X$ and the set of quantum permutation matrices by the error
$${error} := \max\left\{\max_{i=1}^n \left\|\sum_{k=1}^n X_{ik} - I_n\right\|_2  ,  \max_{j=1}^n \left\|\sum_{k=1}^n X_{kj} - I_n\right\|_2  \right \}.$$
When computing the error in the Algorithm \ref{alg:Sinkhorn-Sn-plus} below, note that we only need to take into account the row sums, since this computation follows column normalization, hence the column sums are (close to being) normalized. 

We provide in Algorithm \ref{alg:Sinkhorn-Sn-plus} a detailed description of the algorithm producing quantum permutation matrices. The main functions in the algorithm are the row and column normalizations procedures. We present next the row normalization routine, leaving the case of columns to the reader.

\begin{algorithmic}[1]
\Procedure{NormalizeRows}{$x$}
\For{$i \in [n]$}\Comment{normalize rows}
\State $R_i \gets 0_n$
\For{$j \in [n]$} \Comment{compute row sums}
\State $R_i \gets R_i + x_{ij}x_{ij}^*$
\EndFor
\For{$j \in [n]$} \Comment{update elements}
\State $x_{ij} \gets R_i^{-1/2}x_{ij}$
\EndFor								
\EndFor
\State \Return $x$
\EndProcedure
\end{algorithmic}

\begin{algorithm}
	\caption{Sinkhorn algorithm for $S_n^+$}	
	\begin{algorithmic}[1]
		
		\Procedure{GenerateMagicUnitary}{$n,\epsilon,N_{max}$}
		\State $x \gets \text{random $n \times n \times n$ complex Gaussian tensor}$\Comment{random initialization}
		\State $iter \gets 0$\Comment{iteration counter}
		\Repeat
		\State $iter \gets iter + 1$
		\State $x \gets \textsc{NormalizeRows}(x)$ \Comment{normalize rows}
		\State $x \gets \textsc{NormalizeCols}(x)$ \Comment{normalize columns}
		\State $error \gets 0$\Comment{compute error}
		\For{$i \in [n]$}\Comment{need to check rows only}	
		\State $R_i \gets 0_n$ 
		\For{$j \in [n]$} \Comment{compute row sums}
		\State $R_i \gets R_i + x_{ij}x_{ij}^*$
		\EndFor	
		\State $error \gets \max(error, \|R_i - I_n\|_2)$ \Comment{compute maximal error}
		\EndFor
		\Until{$error < \epsilon$ \textbf{or} $iter > N_{max}$}\Comment{end conditions}
		\If {$error < \epsilon$}
		\State \Return{ $x$} \Comment{quantum permutation found}
		\Else
		\State \Return{$\text{failure}$} \Comment{too many iterations}
		\EndIf
		\EndProcedure
		
	\end{algorithmic}
	\label{alg:Sinkhorn-Sn-plus}
\end{algorithm}

We finish this section by re-stating a conjecture from \cite{banica2017flat}; note that a proof of convergence in the setting where unit rank matrices are replaced by positive definite elements is provided in \cite[Proposition A.1]{benoist2017bipartite}. 

\begin{conj}
For any precision parameter $\epsilon > 0$ and for almost all initializations of the alternating normalization procedure from Algorithm \ref{alg:Sinkhorn-Sn-plus}, the program will terminate sucessfully after a finite number of steps. 
\end{conj}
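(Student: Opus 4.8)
Since the statement is a conjecture re-stated from \cite{banica2017flat}, I will only outline a line of attack and say where I expect it to stall. First, a reduction: it suffices to prove that, for almost every initialization, the quantity $error$ computed in Algorithm~\ref{alg:Sinkhorn-Sn-plus} tends to $0$ along the iterations, since finite termination below any fixed $\epsilon$ is then automatic. So the real task is to establish convergence of the alternating row/column normalization towards the set of rank-one magic unitaries.

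The plan is to exhibit a Lyapunov function. Each normalization only rescales inside a row (resp.\ column), so the orbit stays in the stratum where every block is a rank-one positive operator, $X_{ij}^{(t)}=x_{ij}^{(t)}(x_{ij}^{(t)})^{*}$. Consider the configuration $X^{(t)}$ obtained right after a full iteration $T$ (row normalization followed by column normalization), so the last step was a column normalization; by the orthonormalization lemma stated above, applied columnwise, the vectors $(x_{ij}^{(t)})_{i}$ form orthonormal bases, hence $\|x_{ij}^{(t)}\|=1$ for all $i,j$ and $\operatorname{Tr}R_i^{(t)}=n$ for the row sums $R_i^{(t)}:=\sum_{j}X_{ij}^{(t)}$. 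Applying AM--GM to the eigenvalues of $R_i^{(t)}$ gives
\[
\Phi(X^{(t)}):=\prod_{i=1}^{n}\det R_i^{(t)}\in(0,1],
\]
with $\Phi(X^{(t)})=1$ if and only if $R_i^{(t)}=I_n$ for all $i$, i.e.\ if and only if $X^{(t)}$ is a magic unitary (the column sums being already $I_n$). Moreover, on any region where $\Phi\ge c>0$ the eigenvalues of the $R_i^{(t)}$ are pinched into a compact subinterval of $(0,\infty)$, and a one-variable comparison of $-\log\lambda$ with $(\lambda-1)^2$ (using $\sum_k(\lambda^{(i)}_k-1)=0$) yields a quantitative bound
\[
\max_i\|R_i^{(t)}-I_n\|_2^{2}\ \le\ K_{n,c}\,\bigl(-\log\Phi(X^{(t)})\bigr),
\]
whose left-hand side is exactly $(error)^2$ at step $t$; hence it is enough to prove $\Phi(X^{(t)})\to 1$.

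The main line is then: (i) show $\Phi$ is non-decreasing along full iterations, with $\Phi(T X^{(t)})=\Phi(X^{(t)})$ only when $X^{(t)}$ is a magic unitary; (ii) deduce $\Phi(X^{(t)})\uparrow\Phi_\infty\le 1$ and, since $\Phi\ge\Phi(X^{(0)})>0$ keeps all row and column sums uniformly invertible, confine the orbit to a compact set on which $T$ is continuous; (iii) extract a subsequential limit $X_\infty$ and use continuity to get $\Phi(TX_\infty)=\Phi(X_\infty)=\Phi_\infty$, so by the strictness in (i) $X_\infty$ is a magic unitary and $\Phi_\infty=1$. For (i), note that a row normalization instantly raises $\prod_i\det R_i$ to $1$, so the content of (i) is the operator inequality that the ensuing column normalization does not push $\prod_i\det R_i$ strictly below $\Phi(X^{(t)})$; the tools to try here are the Minkowski determinant inequality, concavity of $\log\det$ and Lieb-type concavity, or, more structurally, rewriting the two constraints $\sum_jX_{ij}=I_n$ and $\sum_iX_{ij}=I_n$ as the two marginal conditions of an operator/tensor scaling problem and importing the convergence theory available there. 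For the clause \emph{almost all initializations}, the normalization maps are undefined precisely on the algebraic (hence Lebesgue-null) locus where some row or column frame is linearly dependent, and the orbit of a generic point avoids it because $\Phi\ge\Phi(X^{(0)})>0$ while $\Phi\to 0$ as a row frame degenerates; a further elementary genericity argument ensures $\Phi(X^{(0)})>0$, i.e.\ that the first normalization is well defined.

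The hard part is the strict monotonicity in (i). When the rank-one blocks are replaced by positive definite ones this is exactly \cite[Proposition~A.1]{benoist2017bipartite}: there the column normalization is the Bregman projection, for the log-determinant divergence, onto the affine set of configurations with prescribed column sums, which makes the associated functional convex and the alternating scheme an alternating convex minimization. In the rank-one case this collapses: the orbit never leaves the boundary stratum of rank-one operators, that stratum is not convex, and the log-determinant divergence of the individual blocks is identically $+\infty$. One therefore needs either a Lyapunov function genuinely adapted to the underlying flag/Grassmannian geometry, or a moment-polytope argument showing that for almost every rank-one configuration the uniform marginals lie in the relative interior of the corresponding moment polytope; this last point is what is not currently available. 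A perturbative reduction to the positive definite case (replacing $x_{ij}x_{ij}^{*}$ by $x_{ij}x_{ij}^{*}+\delta I_n$ and letting $\delta\to 0$) hits the same wall, since the quantitative control in \cite{benoist2017bipartite} degenerates as $\delta\to 0$.
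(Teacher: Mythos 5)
This statement is stated in the paper as a \emph{conjecture}, re-stated from \cite{banica2017flat}; the paper supplies no proof of it, only the remark that the analogous statement with positive definite blocks is \cite[Proposition A.1]{benoist2017bipartite}. So there is no argument of the authors to compare yours against, and your proposal, which explicitly does not claim to close the problem, should be judged as a research plan. As such it is sound in its preliminary steps: the reduction to showing $error\to 0$, the observation that after a column normalization every $\|x_{ij}\|=1$ and hence $\operatorname{Tr}R_i=n$, the AM--GM argument giving $\Phi:=\prod_i\det R_i\le 1$ with equality exactly at magic unitaries, and the elementary bound $\max_i\|R_i-I_n\|_2^2\le K_{n,c}\bigl(-\log\Phi\bigr)$ on a set where the eigenvalues are pinched (via $\lambda-1-\log\lambda\ge c'(\lambda-1)^2$ and $\sum_k(\lambda_k^{(i)}-1)=0$) are all correct. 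This is essentially the capacity/progress-measure scheme from operator scaling, and you have correctly located the single step on which everything hinges.

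The genuine gap is exactly where you put it, and it is worth stating why it is not merely technical. Your steps (ii), (iii) and the genericity argument all consume the conclusion of (i): the uniform lower bound $\Phi\ge\Phi(X^{(0)})>0$ is what confines the orbit to a compact set where $T$ is continuous, keeps the row/column frames invertible so the iteration is even defined, and makes the pinching constant $c$ in your error bound uniform. Without (i) the whole chain is circular. And (i) is precisely what is not available in the rank-one stratum: the known monotonicity of the log-determinant functional under Sinkhorn-type steps (Gurvits' capacity argument, or the Bregman-projection reading of \cite[Prop.~A.1]{benoist2017bipartite}) relies on strict positivity of the blocks, and the rank-one orbit lives entirely on the boundary where the relevant divergence is $+\infty$ and the constraint set is non-convex. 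Even granting a non-strict version of (i), your step (iii) additionally needs the \emph{strictness} clause ($\Phi(TX_\infty)=\Phi(X_\infty)$ forces $X_\infty$ magic), which is a second unproven assertion; a fixed point of $T$ that is not a magic unitary is not excluded by anything you have written. Finally, note that in the operator scaling literature monotonicity of the capacity is usually the easy half; the hard half is showing the capacity (here, the achievable supremum of $\Phi$ along the orbit) is bounded away from $0$, i.e.\ a moment-polytope/null-cone membership statement for generic rank-one configurations --- which is the same obstruction you name, seen from the other side. So the proposal correctly maps the territory but, as you acknowledge, does not prove the conjecture, and the conjecture remains open in the paper as well.
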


\subsection{Restriction to uniformly vertex transitive graphs}

Before presenting the algorithm for quantum automorphisms of graphs, Algorithm \ref{alg:Sinkhorn-QAut}, we make in this subsection and in the following one some important observations. First, it turns out that, given the type of quantum permutation matrices we consider, we will have to restrict our attention to graphs of a certain type.

\begin{defn}\label{def:vertex-transitive}
Let  $\Gamma=(V,E)$ be a graph with $V=\{1,\ldots,n\}$.
\begin{itemize}
\item[(a)] The graph is \emph{vertex transitive}, if for all $i=1,\ldots,n$ we find $\sigma_1^i,\ldots,\sigma_n^i\in\Aut(\Gamma)$ such that all entries of the $i$-th row of $\sum_k \sigma_k^i$ are 1. This expresses the property, that each vertex may be mapped to each other vertex by some automorphism.
\item[(b)] The graph is \emph{uniformly vertex transitive}, if there are $\sigma_1,\ldots,\sigma_n\in\Aut(\Gamma)$ such that all entries of  $\sum_k \sigma_k$ are 1. We call the set $\{\sigma_1,\ldots,\sigma_n\}$ a maximal Schur set.
\end{itemize}
\end{defn}

Note that a uniformly vertex transitive graph is in particular vertex transitive, putting all $\sigma_k^i=\sigma_k$. Moreover, the matrix with all entries equal to one is the unit for the Schur product (also called Hadamard product), hence  the name maximal Schur set. Any Cayley graph of a group $G$ (seen as a directed graph with unlabeled edges) is uniformly vertex transitive, its maximal Schur set simply being $G$ itself. More generally, any vertex transitive graph whose automorphism group has the same cardinality as its vertex set, is uniformly vertex transitive.

In \cite{UVT} we introduce the notion uniformly vertex transitive and study it in detail. In particular, we prove the following chain of implications:
\[\textnormal{Cayley graph} \qquad \Longrightarrow \qquad \textnormal{uniformly vertex transitive}\qquad \Longrightarrow \qquad \textnormal{vertex transitive}\]

Both implications are strict, an example for a uniformly vertex transitive graph which is no Cayley graph is the Petersen graph, while an example for a vertex transitive graph which is not uniformly vertex transitive is the line graph of the Petersen graph. See \cite{UVT} for details.

\subsection{A technical lemma}

In order to state our algorithm for finding quantum symmetries of graphs, we need the following technical lemma on the existence of an optimal transform matrix playing the role of the matrix $S$ from \eqref{eq:normalization-S} in the setting of subspaces of $\mathbb C^n$. We denote the group of $r\times r$ complex valued unitary matrices by $U(r)$. By $\mathrm{Diag}_r^{>0}$, we denote the $r\times r$ diagonal matrices with strictly positive diagonal entries. 

For two given self-adjoint matrices $A,B \in M_n(\mathbb C)$, the equation $FAF^* = B$ (in the unknown variable $F \in M_n(\mathbb C)$) has a solution iff $\operatorname{rk} A \geq \operatorname{rk} B$. We shall be interested in finding such a solution of ``minimal disturbance'', i.e.~close to the identity on the support of $A$. For our applications to finding quantum symmetries of graphs, we shall consider the special case where $\operatorname{rk} A = \operatorname{rk} B =: r$ and $B = B^* = B^2$ is an orthogonal projection; note however that the ideas below can be easily modified to cover the general case. If we write $P_A$, resp.~$P_B$, for the orthogonal projections on the supports of $A$, resp.~$B$, it is clear that we can restrict the space of solutions $F$ to matrices satisfying 
\begin{equation}\label{eq:F-restrictions-projection}
P_B F = F P_A = F.
\end{equation}

\begin{lem}\label{lem:soft-normalization}
Let $A=A^*, B=B^*=B^2 \in M_n(\C)$ be matrices of rank $r$ admitting singular value decompositions 
$$A = V_A \Delta_A V_A^* \quad \text{ and } \quad B= V_B I_r V_B^*,$$
for some isometries $V_A, V_B: \C^r \to \C^n$ and some positive diagonal matrix  $\Delta_A \in \mathrm{Diag}^{>0}_r$. Then, the set of solutions $F \in M_n(\C)$ of the equation $FAF^* = B$ satisfying Eq.~\eqref{eq:F-restrictions-projection} is parameterized by unitary matrices $W \in U(r)$ as follows:
$$F_W= V_B W \Sigma V_A^*,$$
with $\Sigma= \Delta_A^{-\frac{1}{2}} \in \mathrm{Diag}^{>0}_r$. Furthermore, for the unitary $W_0$ defined by the polar decomposition
$$V_B^*V_A\Sigma = W_0 |V_B^*V_A\Sigma|,$$ 
it holds that
\begin{align*}
\|F_{W_0} - P_A\|_2 = \min_{W \in U(r)}\|F_W - P_A\|_2.
\end{align*}
\end{lem}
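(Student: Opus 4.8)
The plan is to first verify that every solution $F$ of $FAF^* = B$ subject to the support constraints \eqref{eq:F-restrictions-projection} has the claimed form $F_W = V_B W \Sigma V_A^*$, and then to solve the resulting minimization problem over $W \in U(r)$ explicitly. For the parameterization step, I would start from the constraint $P_B F = F P_A = F$, which forces $F$ to factor through the ranges of $V_B$ and $V_A$; writing $P_A = V_A V_A^*$ and $P_B = V_B V_B^*$, this means $F = V_B G V_A^*$ for a unique $G \in M_r(\C)$. Substituting into $FAF^* = B$ and using $A = V_A \Delta_A V_A^*$, $B = V_B V_B^*$ together with $V_A^* V_A = V_B^* V_B = I_r$, the equation collapses to $G \Delta_A G^* = I_r$. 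Since $\Delta_A$ is positive definite, $G \Delta_A^{1/2}$ is then an $r\times r$ matrix with $(G\Delta_A^{1/2})(G\Delta_A^{1/2})^* = I_r$, i.e.\ it is unitary, say equal to $W \in U(r)$; hence $G = W \Delta_A^{-1/2} = W\Sigma$, giving $F = V_B W \Sigma V_A^*$. Conversely every such $F_W$ manifestly satisfies both the equation and \eqref{eq:F-restrictions-projection}, so the parameterization is exact.

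For the optimization step, I would expand the Hilbert--Schmidt norm $\|F_W - P_A\|_2^2 = \|V_B W \Sigma V_A^* - V_A V_A^*\|_2^2$. Expanding the square via $\|X-Y\|_2^2 = \|X\|_2^2 - 2\operatorname{Re}\operatorname{Tr}(X^*Y) + \|Y\|_2^2$, the term $\|V_B W \Sigma V_A^*\|_2^2 = \operatorname{Tr}(V_A \Sigma W^* V_B^* V_B W \Sigma V_A^*) = \operatorname{Tr}(\Sigma^2)$ is independent of $W$ (using $V_B^*V_B = I_r$, $V_A^*V_A = I_r$, $W^*W = I_r$), and $\|P_A\|_2^2 = r$ is a constant. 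So minimizing over $W$ amounts to \emph{maximizing} $\operatorname{Re}\operatorname{Tr}\big((V_B W \Sigma V_A^*)^* V_A V_A^*\big) = \operatorname{Re}\operatorname{Tr}\big(V_A \Sigma^* W^* V_B^* V_A V_A^*\big) = \operatorname{Re}\operatorname{Tr}\big(\Sigma W^* V_B^* V_A\big) = \operatorname{Re}\operatorname{Tr}\big(W^* (V_B^* V_A \Sigma)\big)$, where I used cyclicity of the trace, $V_A^* V_A = I_r$, and that $\Sigma$ is real diagonal.

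It then remains the classical fact that $\sup_{W\in U(r)} \operatorname{Re}\operatorname{Tr}(W^* M)$ is attained at the unitary part $W_0$ of the polar decomposition $M = W_0 |M|$, with value $\operatorname{Tr}|M|$; indeed $\operatorname{Re}\operatorname{Tr}(W^*M) = \operatorname{Re}\operatorname{Tr}(W^* W_0 |M|) = \operatorname{Re}\sum_i (W^*W_0)_{ii}\, s_i \le \sum_i s_i$ where $s_i \ge 0$ are the singular values of $M = |M|$'s eigenvalues and $|(W^*W_0)_{ii}| \le 1$ because $W^*W_0$ is unitary, with equality precisely when $W^*W_0$ acts as the identity on the support of $|M|$. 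Applying this with $M = V_B^* V_A \Sigma$ gives exactly the claimed $W_0$ from the polar decomposition $V_B^* V_A \Sigma = W_0 |V_B^* V_A \Sigma|$, finishing the proof. I expect the only mildly delicate point to be the bookkeeping with the isometries $V_A, V_B$ in the trace manipulations — in particular keeping track that $\Sigma$ is invertible so that $|V_B^*V_A\Sigma|$ may be singular only to the extent $V_B^*V_A$ is, which is harmless since the polar-decomposition maximizer argument does not require $M$ to be invertible.
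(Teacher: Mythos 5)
Your proposal is correct and follows essentially the same route as the paper: establish the parameterization $F_W = V_B W \Sigma V_A^*$ (you do this slightly more cleanly by using the support constraints to reduce to an $r\times r$ unknown $G$ with $G\Delta_A G^* = I_r$, where the paper instead factors $B = (FV_A\Delta_A^{1/2})(FV_A\Delta_A^{1/2})^*$), then reduce the minimization to maximizing $\operatorname{Re}\operatorname{Tr}\bigl(W^*\,V_B^*V_A\Sigma\bigr)$ over $U(r)$. Both you and the paper resolve that maximization by the standard polar-decomposition/von Neumann trace argument (the paper spells it out via an explicit SVD and the matrix H\"older inequality, arriving at the same $W_0$); there are no gaps.
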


\begin{proof}
First, let us check that the matrices $F_W$ satisfy our matricial equation:
\begin{align*}
F_WAF_W^* &= V_B W\Sigma V_A^*A V_A\Sigma W^*V_B^*\\
&=V_B W\Sigma V_A^*V_A \Delta_A V_A^* V_A\Sigma W^*V_B^*\\
&=V_B W\Sigma \Delta_A \Sigma W^*V_B^*\\
&=V_B WW^*V_B^*\\
&=V_B I_r V_B^*\\
&=B,
\end{align*}
where we have used the fact that $W$ is unitary and $V_{A}$, $V_B$ are isometries. Moreover, since  $P_{A}=V_{A} V_{A}^*$, we also have
$$F_W P_A=  V_B W\Sigma V_A^* V_A V_A^* = F_W,$$
showing that the matrices $F_W$ also satisfy Eq.~\eqref{eq:F-restrictions-projection}. For the reverse direction, assuming $FAF^*=B$, write
$$V_BI_rV_B^*=B=FAF^* = (FV_A\Delta_A^{1/2})(FV_A\Delta_A^{1/2})^* ,$$
so there must exist a unitary operator $W \in U(r)$ such that
$$FV_A\Delta_A^{1/2} = V_B W,$$
proving the claim that $F = F_W$. 

For the statement about the minimizer $W_0$, we have
\begin{align*}
\|F_W - P_A\|^2_2 &= \| V_B W \Sigma V_A^*- V_A V_A^*\|^2_2\\
&=\operatorname{Tr}((V_B W \Sigma V_A^*- V_A V_A^*)(V_A  \Sigma W^* V_B^*- V_A V_A^*))\\
&= \operatorname{Tr}(V_B W \Sigma^2 W^* V_B^*- V_B W\Sigma V_A^*- V_A \Sigma W^* V_B^* + V_A V_A^*).
\end{align*}
Since $\operatorname{Tr}(V_B W \Sigma^2 W^* V_B^*) = \operatorname{Tr}(\Sigma^2)$ and $\operatorname{Tr}(V_A V_A^*) = r$ do not depend on $W$, we only need to minimize $\operatorname{Tr}(- V_BW\Sigma V_A^*- V_A \Sigma W^* V_B^*) = - 2 \operatorname{Re} \operatorname{Tr}(V_B W \Sigma V_A^*)$. We have
\begin{align*}
\min_{W \in U(r)} -2\operatorname{Re} \operatorname{Tr}(V_B W\Sigma V_A^*)&=-2\max_{W \in U(r)} \operatorname{Re} \operatorname{Tr}(V_B W\Sigma V_A^*)\\
&=-2 \max_{W \in U(r)} \operatorname{Re} \operatorname{Tr}(W \Sigma V_A^*V_B).
\end{align*}
Now, decomposing $\Sigma V_A^*V_B =: X = U_X \Sigma_X V_X^*$ with $U_X, V_X\in U(r)$ we deduce
\begin{align*}
\max_{W \in U(r)} \operatorname{Re} \operatorname{Tr}(W\Sigma V_A^* V_B)&= \max_{W \in U(r)} \operatorname{Re} \operatorname{Tr}(W
U_X \Sigma_X V_X^*)\\
&= \max_{W \in U(r)} \operatorname{Re} \operatorname{Tr}(V_X^* W U_X \Sigma_X)\\
&= \max_{\tilde{W} \in U(r)} \operatorname{Re} \operatorname{Tr}(\tilde{W}\Sigma_X),
\end{align*}
since $V_X^* W U_X$ is unitary. Since $\Sigma_X$ is non-negative, by the matrix H\"older inequality \cite[Chapter IV]{bhatia1997matrix}, we obtain the maximum by choosing $\tilde{W} = I_r$:
$$\operatorname{Re} \operatorname{Tr}(\tilde{W}\Sigma_X) \leq | \operatorname{Tr}(\tilde{W}\Sigma_X)| \leq \|W\|_\infty \|\Sigma_X\|_1 = \operatorname{Tr}\Sigma_X.$$
This yields that the minimum is attained at $F_W$ with $W_0 = V_X U_X^*$.
\end{proof}

\subsection{The algorithm for finding quantum symmetries of graphs: Algorithm \ref{alg:Sinkhorn-QAut}}

In order to check whether a given graph has quantum symmetries, we need to find noncommutative quantum permutation matrices respecting $uA=Au$, in the sense of Lemma \ref{LemQPerm}. 
We thus need to include in Algorithm \ref{alg:Sinkhorn-Sn-plus} the condition $u A=A u$, which is a non-trivial task.

We shall use the same idea of Sinkhorn alternating scaling, with an important twist that we explain next. In the current setting, we are looking for block matrices $X \in M_n(M_n(\mathbb C))$ having unit-rank blocks $X_{ij} = x_{ij} x_{ij}^*$ which should satisfy the following three conditions: 
\begin{enumerate}
\item Row normalization:
$$\forall i \in [n], \qquad \sum_{j = 1}^n X_{ij} = I_n$$
\item Column normalization:
$$\forall j \in [n], \qquad \sum_{i = 1}^n X_{ij} = I_n$$
\item The matrix $X$ encodes a graph symmetry:
$$ X(A \otimes I_n) = (A \otimes I_n)X,$$
where $A$ is the adjacency matrix of the graph $\Gamma = (V,E)$; we shall sometimes abuse notation and write simply $XA = AX$ for the condition above. 
\end{enumerate}
A first idea to produce matrices $X$ satisfying the above three properties is to use Sinkhorn alternating normalizations, enforcing at each step one of the three conditions above. It turns out that inserting the symmetry condition above directly in Algorithm \ref{alg:Sinkhorn-Sn-plus} does not yield a converging algorithm. One can understand this negative result as a consequence of the fact that enforcing the third condition above destroys the magic unitary property of the matrix $X$, rendering the algorithm unstable. 

To circumvent this problem, we introduce a parameter $\tau$ in the algorithm, which will measure the intensity with which the symmetry condition $XA = AX$ will be enforced. The parameter $\tau \in [0,1]$ will be chosen empirically, depending on the graph $\Gamma$ at hand. A small value of $\tau$ will correspond to a ``soft'' application of the normalization $XA = AX$, while the maximal value $\tau = 1$ corresponds to the strict enforcing of the symmetry condition. In practice, we use the value of $\tau$ to interpolate between the initial and the target conditions:
$$\mathrm{Target}^{(\tau)} = \tau \cdot \mathrm{Target} + (1-\tau) \cdot \mathrm{Initial}.$$ 

Let us now describe in detail the normalization procedures used in Algorithm \ref{alg:Sinkhorn-QAut}. The main idea is to integrate the symmetry condition $XA = AX$ (applied with the softening parameter $\tau$) into the usual row and column normalization routines from Algorithm \ref{alg:Sinkhorn-Sn-plus}. To do this, note that the $(i,j)$ element of the equality $XA = AX$ reads
$$\sum_{k\, : \, (k,j) \in E} X_{ik} = \sum_{l \, : \, (i,l) \in E} X_{lj}.$$
Let us assume that we want to perform row normalization on the matrix $X$, and we would like to normalize the sum of the $i$-th row of $X$ to the identity. It turns out that we could impose a stronger constraint than this, by asking, for a fixed $j$, that the following \emph{two} conditions hold:
\begin{align*}
\sum_{k\, : \, (k,j) \in E} X'_{ik} &= \sum_{l \, : \, (i,l) \in E} X_{lj} \\
\sum_{k\, : \, (k,j) \notin E} X'_{ik} &= \sum_{l \, : \, (i,l) \notin E} X_{lj}.
\end{align*}
The two constraints above imply together that the $i$-th row of the updated matrix $X'$ is normalized to the identity, given that the $j$-th column of the original matrix $X$ was normalized (which is true during the execution of our algorithm, since row and column normalizations are alternating). 
\begin{algorithm}
	\caption{Sinkhorn algorithm for $\mathrm{QAut}(\Gamma)$}
	\begin{algorithmic}[1]
		\Procedure{GenerateQAut}{$\Gamma,\tau,\epsilon, N_{max}$}
		\State $x \gets \text{random $n \times n \times n$ complex Gaussian tensor}$\Comment{random initialization}
		\State $iter \gets 0$
		\Repeat
		\State $iter \gets iter + 1$
		\State $x \gets \textsc{NormalizeRowsSoft}(x,\Gamma, \tau)$ \Comment{normalize rows}
		\State $x \gets \textsc{NormalizeColsSoft}(x, \Gamma, \tau)$ \Comment{normalize columns}
		
			\State $errorMagic \gets 0$\Comment{compute error w.r.t.~magic property}
\For{$i \in [n]$}
\State $R_i \gets \sum_j x_{ij}x_{ij}^*$ \Comment{row sum after col.~normalization}
\State{$errorMagic \gets \max(errorMagic,  \|R_i - I_n\|_2)$}
\EndFor
\State $errorComm \gets 0$\Comment{compute error w.r.t.~symmetry}
\For{$(i,j) \in [n]^2$}\Comment{check each element of $XA - AX$}	
\State{$errorMagic \gets \max(errorMagic, \|(XA-AX)_{ij}\|_2)$}
\EndFor
\State $error \gets \max(errorMagic, errorComm)$ \Comment{total error}	
\Until{$error < \epsilon$ \textbf{or} $iter > N_{max}$}\Comment{end conditions}
\If {$error < \epsilon$}
	\State \Return{ $x$} \Comment{quantum graph symmetry found}
\Else
	\State \Return{$\text{failure}$} \Comment{too many iterations}
\EndIf
\EndProcedure
	\end{algorithmic}
\label{alg:Sinkhorn-QAut}
\end{algorithm}

The row and column normalizations are done in a soft way, as described in Lemma \ref{lem:soft-normalization}. The pseudo-code is given below for the row case; we leave the case of column normalization to the reader, as it is very similar. 

	\begin{algorithmic}[1]
\Procedure{NormalizeRowsSoft}{$x,\Gamma,\tau$}
\For{$i \in [n]$}
\State $j \gets \textbf{random}([n])$ \Comment{pick a random column}
\State $R_i \gets 0_n$ 
\State $R_i^\perp \gets 0_n$
\For{$k \in [n]$} \Comment{compute row sums}
\If{$(k,j) \in E(\Gamma)$}
\State $R_i \gets R_i + x_{ik}x_{ik}^*$
\Else
\State $R_i^\perp \gets R_i^\perp + x_{ik}x_{ik}^*$
\EndIf
\EndFor
\State $C_j \gets 0_n$ 
\State $C_j^\perp \gets 0_n$
\For{$k \in [n]$} \Comment{compute column sums}
\If{$(i,k) \in E(\Gamma)$}
\State $C_j \gets C_j + x_{kj}x_{kj}^*$
\Else
\State $C_j^\perp \gets C_j^\perp + x_{kj}x_{kj}^*$
\EndIf
\EndFor		
\State{$(\hat R_i, \hat R_i^\perp) \gets \textsc{NormalizeSum}(R_i,R_i^\perp)$}\Comment{$\hat R_i+ \hat R_i^\perp = I_n$}
\State{$C_j^{(\tau)} \gets \textsc{InterpolateProjections}(\hat R_i, C_j, \tau)$} 
\State{$F \gets \textsc{OptimalTansformation}(R_i,C_j^{(\tau)})$}
\State{$C_j^{\perp(\tau)} \gets \textsc{InterpolateProjections}(\hat R_i^\perp, C_j^\perp, \tau)$} 
\State{$F^\perp \gets \textsc{OptimalTansformation}(R_i^\perp,C_j^{\perp(\tau)})$}
\For{$k \in [n]$} \Comment{update elements}
\If{$(k,j) \in E(\Gamma)$}
\State $x_{ik} \gets Fx_{ik}$
\Else
\State $x_{ik} \gets F^\perp x_{ik}$
\EndIf
\EndFor								
\EndFor	
\State \Return{x}
\EndProcedure		
		
\end{algorithmic}

The procedure above makes use of three subroutines, \textsc{NormalizeSum}, \textsc{InterpolateProjections}, and \textsc{OptimalTransformation}, that we describe next. The first subroutine performs a standard normalization:
$$\textsc{NormalizeSum}(X, Y) = (S^{-1/2} X S^{-1/2}, S^{-1/2} Y S^{-1/2}), \quad \text{ with } S:= X+Y,$$
where the inverse square roots are taken in the Moore-Penrose sense (although the matrix $S$ will be generically invertible in our scenario). Note that the calls to the \textsc{InterpolateProjections} routine on lines 19 and 21 is made with projections $\hat R_i$, $C_j$ (resp.~$\hat R_i^\perp$, $C_j^\perp$) of equal rank. This follows from the fact that any vertex transitive graph is regular and from the running assumption that the vectors $x_{ij}$ appearing in each row or column are in general position. 

The second subroutine, \textsc{InterpolateProjections}, takes as an input two projections $P$, $Q$ of equal rank $r$, and an interpolation parameter $\tau \in [0,1]$. It outputs the closest rank-$r$ projection to $A := \tau P + (1-\tau)Q$. The way this approximation works is via the eigenvalue decomposition of $A$: the closest rank-$r$ projection to $A$ is obtained by setting the top $r$ eigenvalues of $A$ to 1, and all the others to 0. 

The third subroutine, \textsc{OptimalTransformation}, computes the optimal matrix $F$ from Lemma \ref{lem:soft-normalization}, with the choice of the unitary $W$ described there. In practice, it uses the singular value decomposition several times, the implementation being straightforward. 

\section{Experiments and discussion of the results}
\label{sec:experiments}

We gather in this section the numerical experiments we realized using Algorithms \ref{alg:Sinkhorn-Sn-plus}
 and \ref{alg:Sinkhorn-QAut}. We comment on the theoretical implications of these experiments, and how the results can guide future analytical research. 
 All the experimental data and runtimes in this paper was produced in \texttt{Octave 4.0.3.}~(Debian GNU/Linux 9 64-bit), running on an  Intel Core i5-7600 CPU @ 3.50GHz with 16 GB of memory. The code, available in the supplementary material of the arXiv submission, is compatible with \texttt{MATLAB}. 

\subsection{Generation of quantum permutation matrices: Algorithm 1}

We analyze in this section the performance of Algorithm \ref{alg:Sinkhorn-Sn-plus} for generating quantum permutation matrices (recall: no adjacency matrix $A$ and no graphs are involved here). Let us start by pointing out that since the initialization of the matrix of unit-rank projections is random (we use i.i.d.~random Gaussian vectors), the running time (i.e.~the number of iterations), as well as the probability of success (whether the algorithm reaches the desired precision after a given number of iterations) are random variables. We plot in Figure \ref{fig:alg-1-iterations} the data for $n=4$ and $n=10$.

\begin{figure}[H]
    \centering
    \includegraphics[scale=0.55]{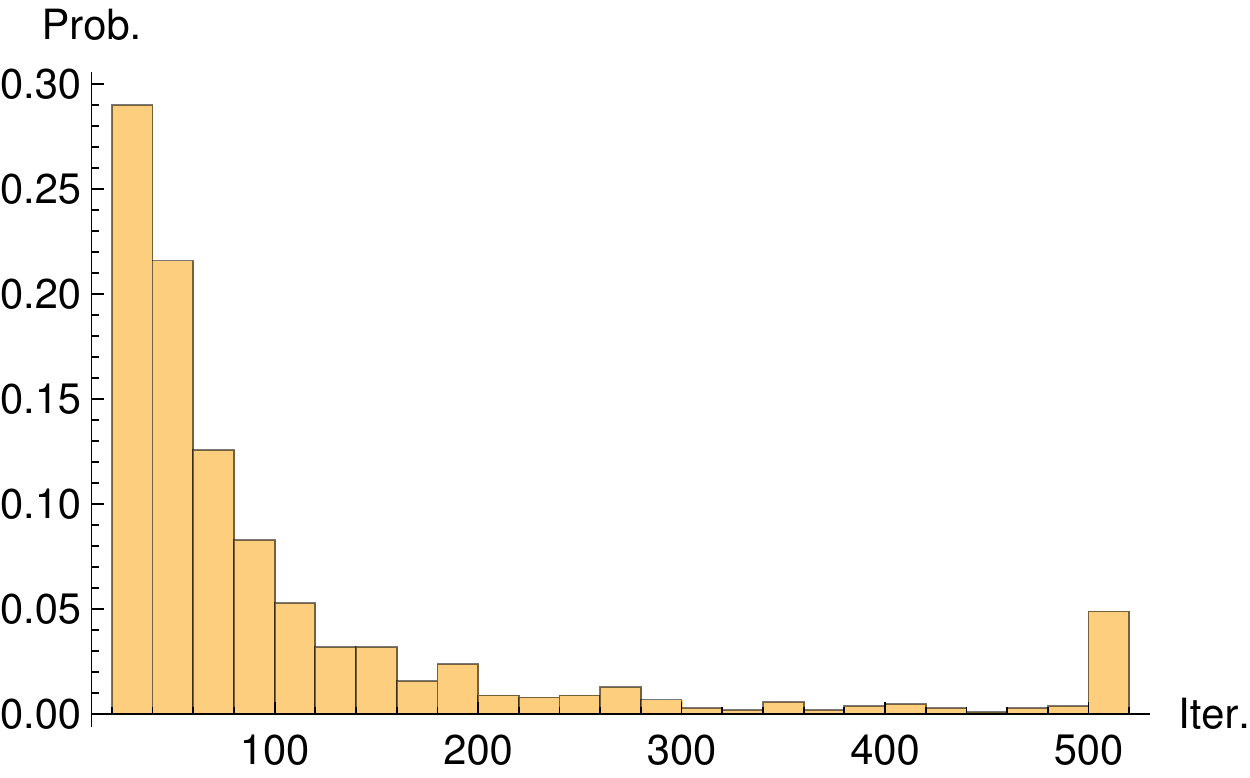} \qquad
    \includegraphics[scale=0.55]{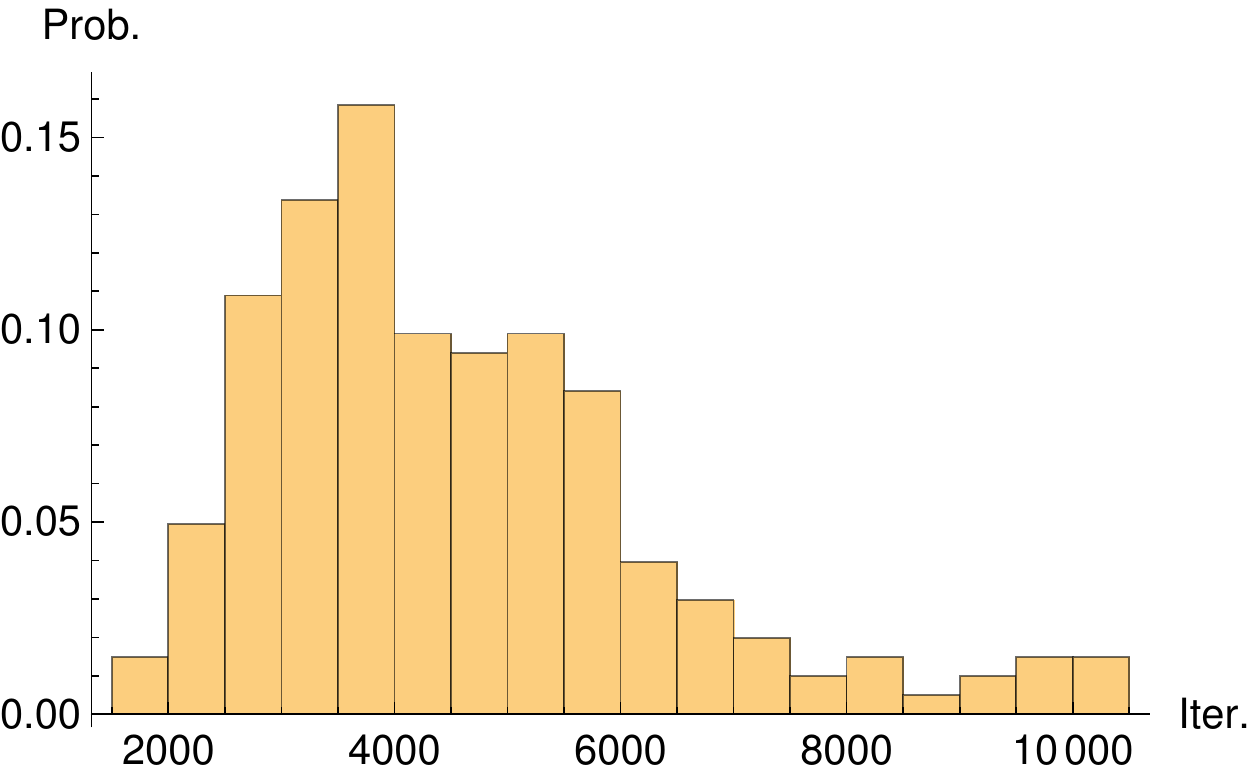}
    \caption{Histograms for the number of iterations Algorithm \ref{alg:Sinkhorn-Sn-plus} needs to produce a quantum permutation matrix. On the left, $n=4$, the precision is set to $10^{-6}$, and the maximal number of iterations is $N_{max} = 500$; on the right, $n=10$, the precision is $10^{-2}$, and $N_{max} = 10^6$. The bars at the right edge of the histograms correspond to failed executions: the desired precision had not been reached after $N_{max}$ iterations. The probability of failure reads $\mathbb P_{n=4}(\text{failure}) = 4.9\%$ and $\mathbb P_{n=10}(\text{failure}) = 0\%$ respectively. Experiments have been ran $10^3$ times for both values of $n$.} 
    \label{fig:alg-1-iterations}
\end{figure}

Algorithm \ref{alg:Sinkhorn-Sn-plus} was used to produce the rows corresponding to the complete graphs $K_{4}$, $K_5$, $K_6$ in Table \ref{tab:graphs-q-sym} (note, for complete graphs, the relation $uA=Au$ is no additional condition). We present in Table \ref{tab:failure-Kn} the probability that the algorithm fails to find an $\epsilon$-good approximation before $N_{max}$ iterations, for $n=4,5,6$, showing the influence of the maximal number of steps the algorithm is allowed to take to reach the desired accuracy on the success probability. 

\begin{table}
\begin{tabular}{|c|c|c|c|}
\hline
$n$ & $\epsilon$ & $N_{max}$ & $\mathbb P(\text{failure})$ \\ \hline \hline
4 & $10^{-6}$ & 500 & 4.9\% \\ \hline
5 & $10^{-6}$ & 2000 & 0.2\% \\ \hline
6 & $10^{-6}$ & 5000 & 11.3\% \\ \hline
\end{tabular}
\caption{Probability of failure of Algorithm \ref{alg:Sinkhorn-Sn-plus} to produce an $\epsilon$-approximation of a magic unitary in $N_{max}$ iterations, as a function of $n$, $\epsilon$, and $N_{max}$.}
\label{tab:failure-Kn}
\end{table}

\subsection{Generation of quantum symmetries of graphs: Algorithm 2}

In this section we discuss certain aspects as well as the produced data of  Algorithm \ref{alg:Sinkhorn-QAut}. Most of our experiments are summarized in Table \ref{tab:graphs-q-sym}. 

We first address the parameter $\tau$. In a sense, this parameter tells our algorithm how fast we want to insert the relation $u A = A u$ at the cost of destroying the normalized rows and columns. There is no natural candidate for an optimal $\tau$. To get an idea, we run the algorithm several times for different values of $\tau$ and compare how fast it terminates (see Figure \ref{fig:alg-2-tau}). Here we have the values $\tau = n\cdot 0.1$ for $n \in \{0,\dots, 10\}$ on the $x$-axis and the number of steps the algorithm takes to terminate on the $y$-axis. We choose to discuss the choice of the hyper-parameter $\tau$ on two graphs, the Petersen and the Cube $Q_3$ graph. These graphs, depicted in Figure \ref{fig:Petersen-Cube} are well-known in graph theory, and they are interesting from our perspective because they are uniformly vertex transitive and they lack (Petersen) resp. have (Cube) quantum symmetry. 

\begin{figure}[H]	
    \centering
    \includegraphics[scale=0.55]{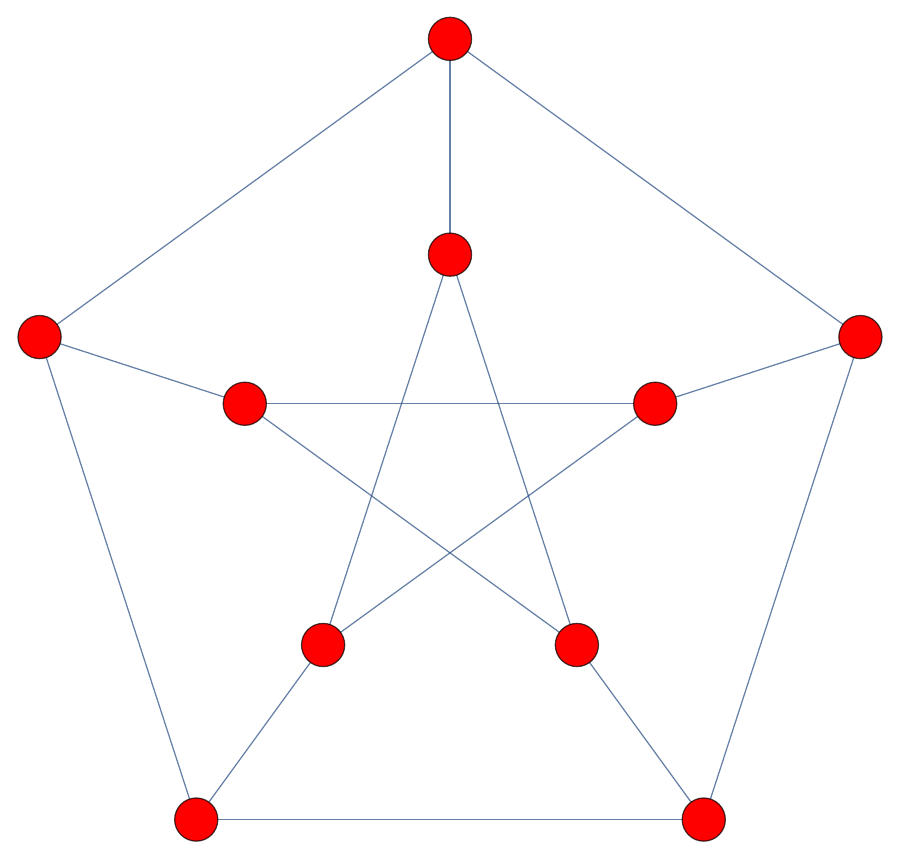} \qquad\qquad\qquad \includegraphics[scale=0.55]{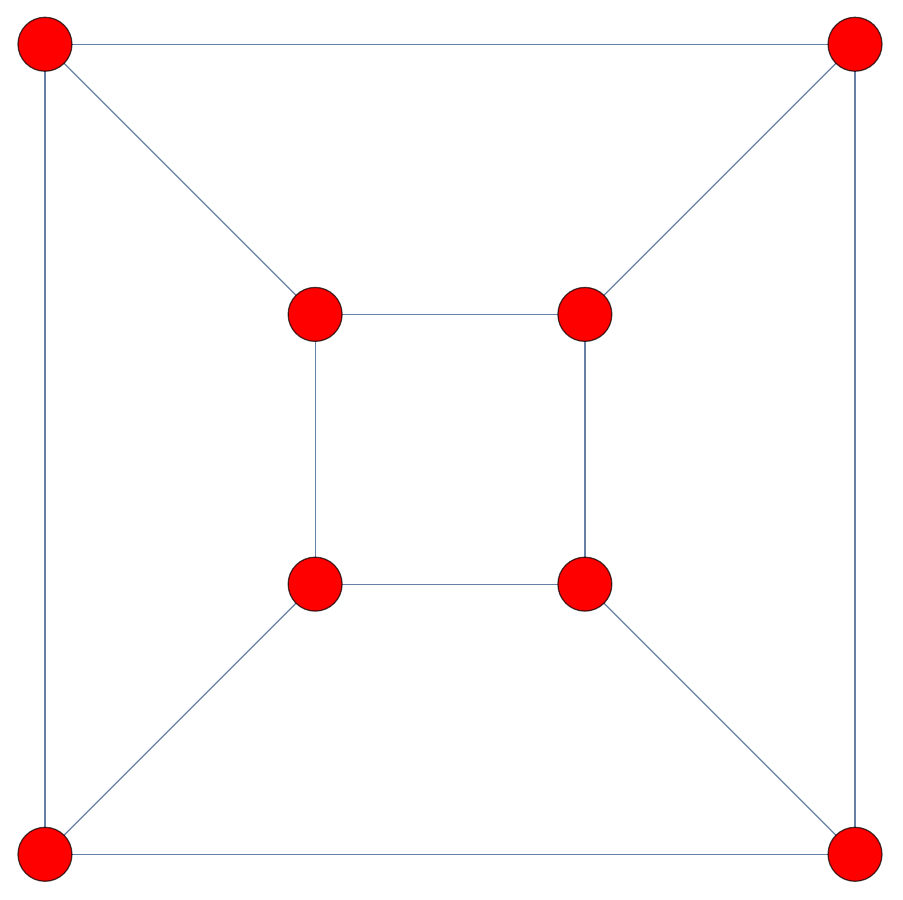}
    \caption{The Petersen (left) and the Cube $Q_3$ graphs.}
    \label{fig:Petersen-Cube}
\end{figure}

\begin{figure}[H]	
    \centering
    \includegraphics[scale=0.55]{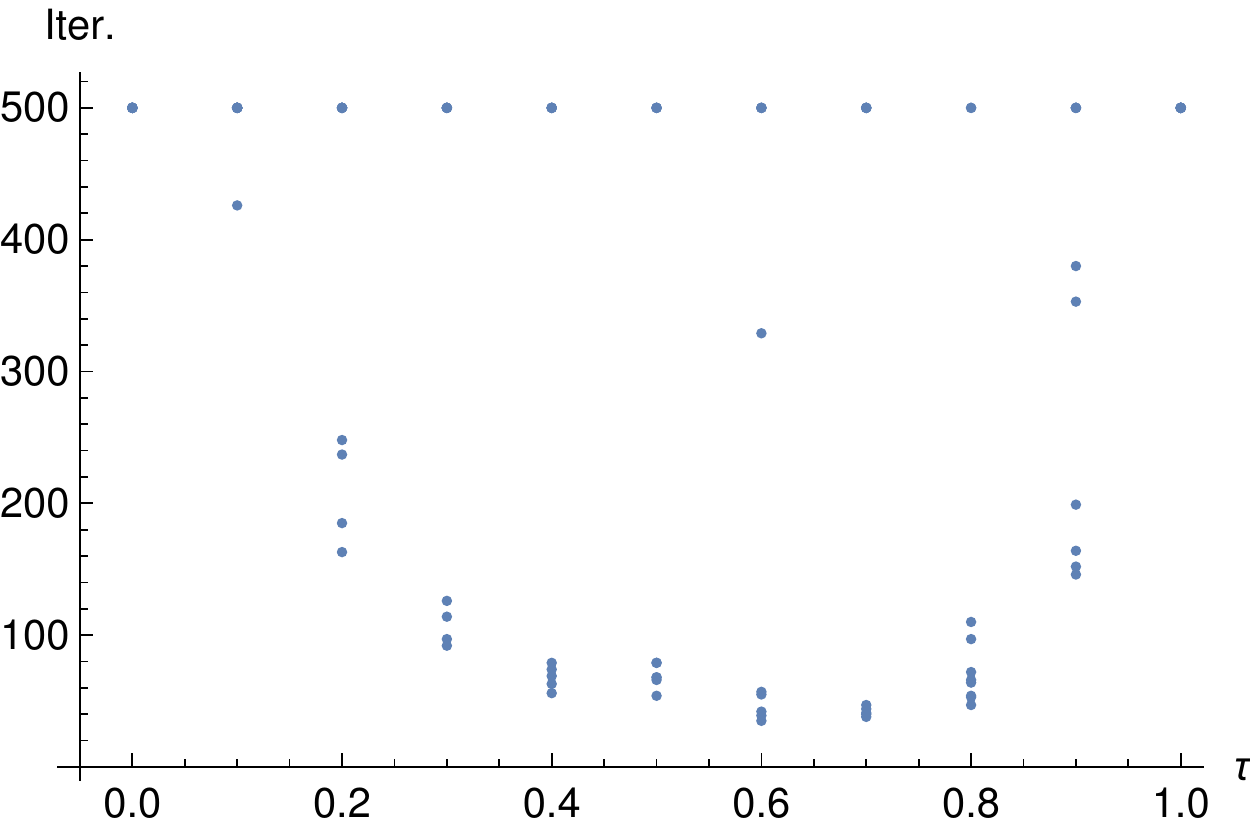} \qquad     \includegraphics[scale=0.55]{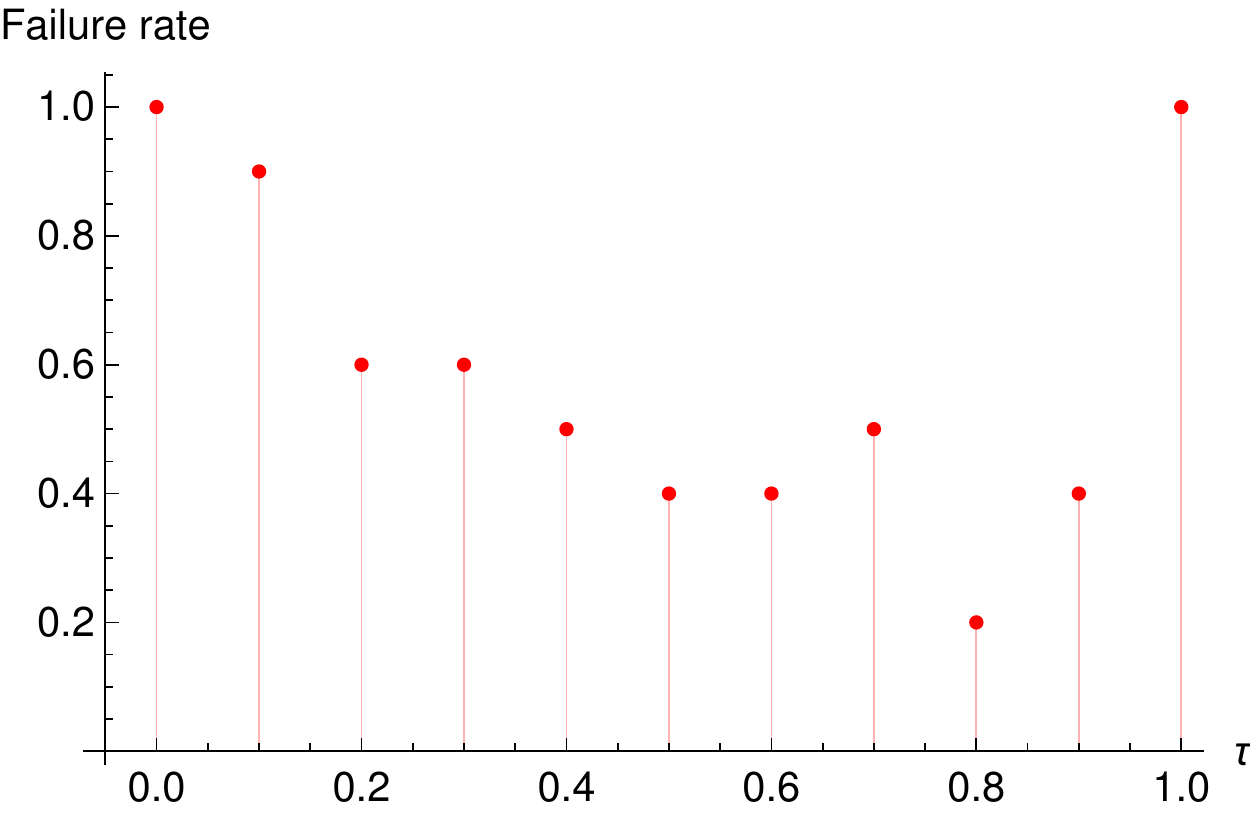} \\
    \includegraphics[scale=0.55]{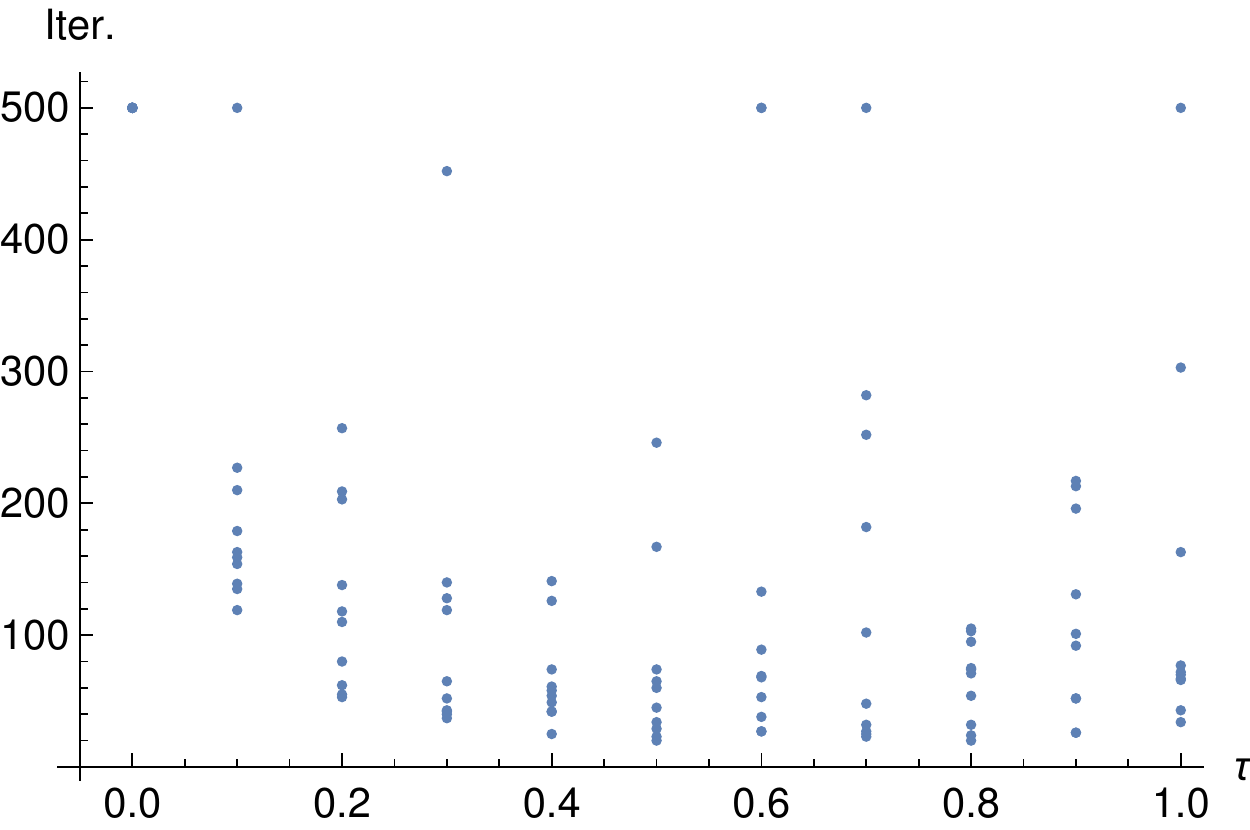} \qquad     \includegraphics[scale=0.55]{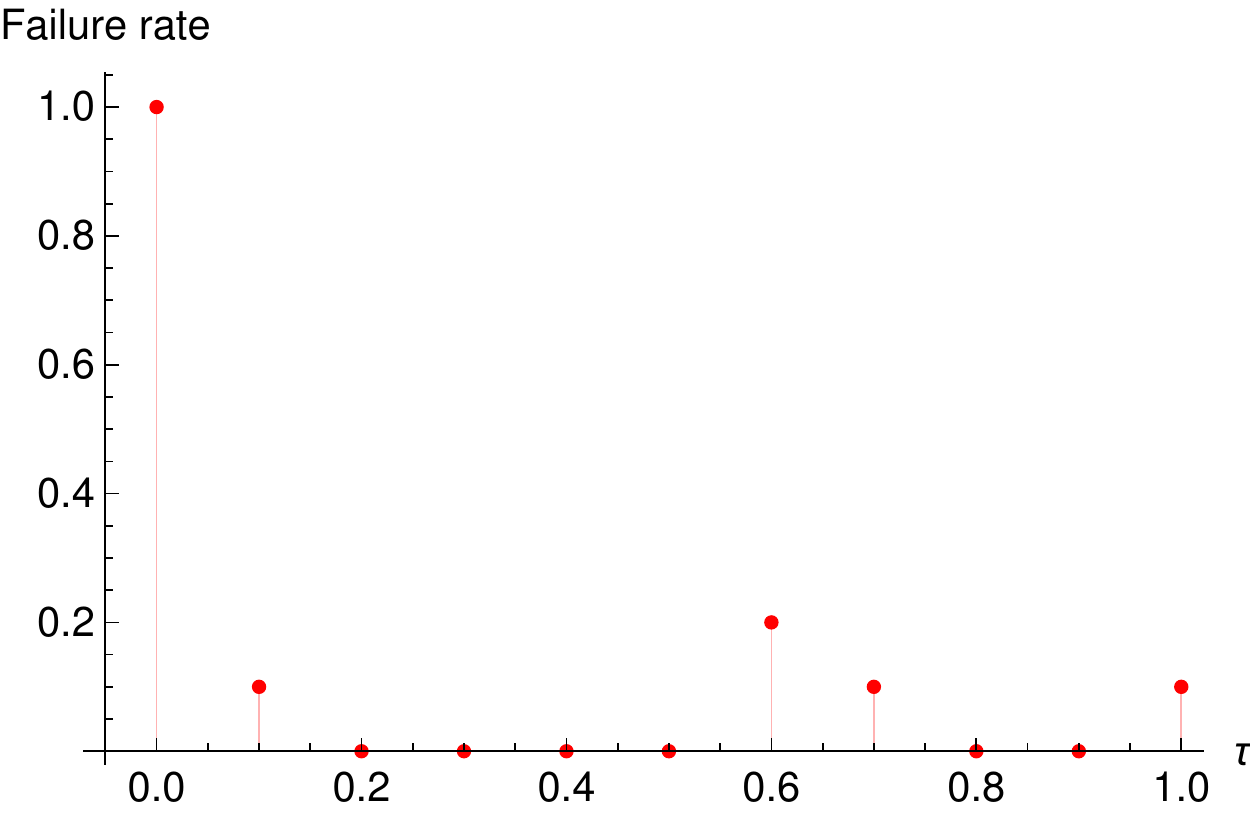} \\
    \caption{Number of iterations the algorithm needs to find an approximative quantum symmetry for the Petersen graph (first row) and the Cube $Q_3$ graph (second row). 
}
    \label{fig:alg-2-tau}
\end{figure}

Looking at the produced data, we choose $\tau =0.5$ for both the Petersen graph the Cube graph. We use this procedure also for other graphs to get some estimates for $\tau$, finding in general that a value of $\tau = 0.5$ works well in practice. 

Now, we run our algorithm for the chosen $\tau$ many times. We plot the probability that the algorithm stops ($y$-axis) at a certain number of steps ($x$-axis), for the Cube graph and the Petersen graph. We do this first for the Cube graph, see Figure \ref{figure2}. 

\begin{figure}[H]
    \centering
    \includegraphics[page=1, scale=0.5]{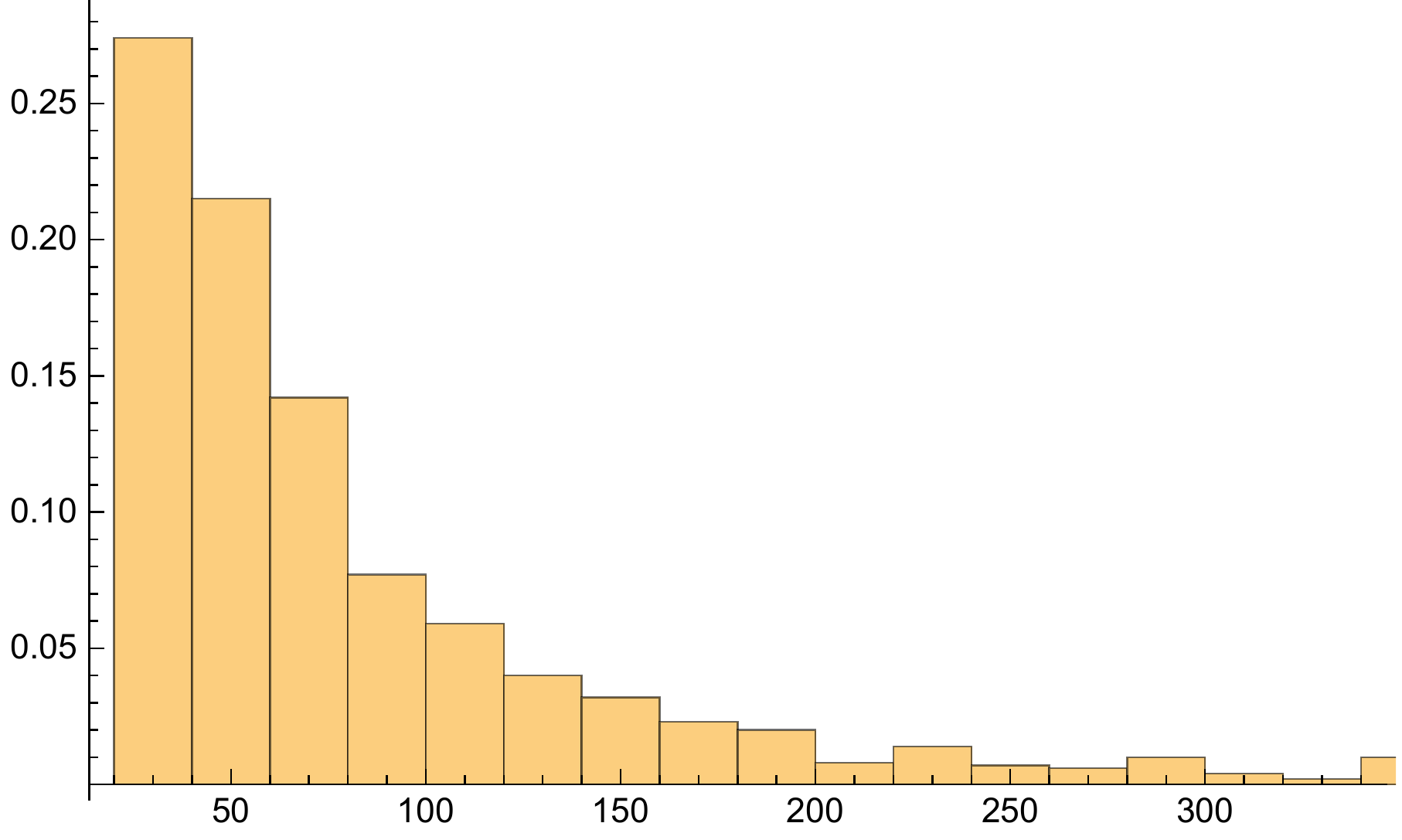}
    \caption{Steps after the algorithm terminates for the Cube $Q_3$ graph.}
    \label{figure2}
\end{figure}

The probability that the algorithm stops goes down quite smoothly. For the Petersen graph, however, the picture looks quite differently. 

\begin{figure}[H]
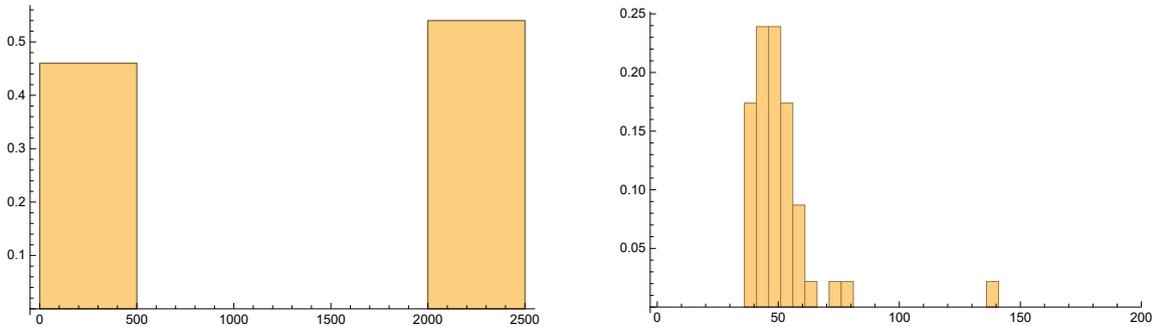

    \centering
    \subfloat{{\includegraphics[page=2, trim = 0mm 125mm 0mm 0mm, clip, width=0.45\textwidth]{DataAlgCubePetersen} }}
    \qquad
    \subfloat{{\includegraphics[page=2, trim = 0mm 2mm 0mm 120mm, clip, width=0.45\textwidth]{DataAlgCubePetersen} }}
    \caption{Steps after the algorithm terminates for the Petersen graph, on a large scale on the left, zoomed in on the right.  }
    \label{figure3}
\end{figure}

Note that we stopped the algorithm after $2000$ steps. Thus, Figure \ref{figure3} tells us that the algorithm either terminates quickly, after around $50$ steps, or it does not converge before the $2000$ allowed steps. One could try to explain the difference in the Figures \ref{figure2} and \ref{figure3} by saying that the algorithm is more likely to terminate with ``bad'' starting data for the Cube graph, because in contrast to the Petersen graph, the Cube graph has quantum symmetry, see \cite{banica2007hyperoctahedral,schmidt2018petersen}. We do not know if the algorithm always behaves similar to those two cases for graphs with or without quantum symmetry. 

Another important question is the following: How good is  Algorithm \ref{alg:Sinkhorn-QAut}  in detecting quantum symmetry correctly, i.e.~are the matrices we obtain from the algorithm non-commutative exactly for graphs with quantum symmetry and commutative for graphs without quantum symmetry? Looking at Table \ref{tab:graphs-q-sym}, we see that this is always the case for the graphs with at most 16 vertices whenever we may confirm our findings by theoretical means, i.e.~whenever the existence or absence of quantum symmetry has been proven by other means. Thus, first running the algorithm several times for the same graph $\Gamma$, we can guess whether or not this graph has quantum symmetry in order to obtain a hint whether we should aim at proving or rather disproving the existence of quantum symmetry in this case.

For the Hamming graph $H(2,4)$ on 16 vertices, the algorithm returns twice a commutative matrix despite the fact that this graph has quantum symmetry. For all other graphs with quantum symmetry, this never happens. This raises the question how $\Aut(H(2,4))$ sits inside $\QAut(H(2,4))$, in some sense -- possibly, $\Aut(H(2,4))$  is ``relatively large''.

Finally, let us outline a possible way of circumventing the restrictions that our graphs should be uniformly vertex transitive. Instead of working with blocks of rank one, we could consider a more general case, where the blocks of the magic unitary encoding the graph symmetry could have general ranks. These ranks should be pre-assigned and they should satisfy some compatibility condition imposed by the adjacency matrix $A$ of the graph $\Gamma$. Note that for any vertex transitive graph, there is a $k\in\mathbb N$ and $I\subset \Aut(\Gamma)$ such that $\sum_{\sigma\in I}\sigma=kI_n$. For instance, take $I=\Aut(\Gamma)$ and $|\Aut(\Gamma)|=k|V|$. Thus, uniform vertex transitive graphs are just those where $k=1$ is the minimal such number.
We leave such generalizations of Algorithm \ref{alg:Sinkhorn-QAut} for future work. 

\bigskip

\noindent\textit{Acknowledgments.} 
The authors would like to thank Piotr So{\l}tan and Laura Man\v{c}inska for very useful discussions around some of the topics of this work. They would also like to acknowledge the hospitality of the Mathematisches Forschungsinstitut Oberwolfach during the workshop 1819 ``Interactions between Operator Space Theory and Quantum Probability with Applications to Quantum Information'', where some of this research has been done. 
I.N.'s research has been supported by the ANR, projects {StoQ} (grant number ANR-14-CE25-0003-01) and {NEXT} (grant number ANR-10-LABX-0037-NEXT). S.Sch and M.W. have been supported by the \emph{DFG} project \emph{Quantenautomorphismen von Graphen}. M.W. has been supported by \emph{SFB-TRR 195}.

\bibliographystyle{alpha}
\bibliography{qg-qit}

\end{document}